\theoremstyle{plain}
\newtheorem{thm}{Theorem}[section] 
\newtheorem{cor}[thm]{Corollary}
\newtheorem{prop}[thm]{Proposition}
\newtheorem{conj}[thm]{Conjecture}
\newtheorem{lem}[thm]{Lemma}
\newtheorem*{mainthm}{Main Theorem}
\newtheorem*{conjC}{Conjecture $\mathrm{N}_n$}
\newtheorem*{conjC'}{Conjecture $\mathrm{N}_n'$}
\newtheorem*{conjD}{Conjecture $\mathrm{H}_n$}
\theoremstyle{definition} 
\newtheorem{defn}[thm]{Definition}
\newtheorem{eg}[thm]{Example} 
\theoremstyle{remark}
\newtheorem{rem}[thm]{Remark}
\newtheorem*{cl}{Claim}
\newtheorem*{acknowledgement}{Acknowledgments}
\newcommand{\sO}{\mathcal{O}}
\newcommand{\Z}{\mathbb{Z}}
\newcommand{\N}{\mathbb{N}} 
\newcommand{\Q}{\mathbb{Q}} 
\newcommand{\C}{\mathbb{C}}
\newcommand{\F}{\mathbb{F}}
\newcommand{\pP}{\mathbb{P}}
\newcommand{\m}{\mathfrak{m}}
\newcommand{\n}{\mathfrak{n}}
\newcommand{\Spec}{\mathop{\mathrm{Spec}}\nolimits}
\newcommand{\Proj}{\mathop{\mathrm{Proj}}\nolimits}
\newcommand{\Ker}{\mathop{\mathrm{Ker}}\nolimits}
\newcommand{\bigzerol}{\smash{\lower1.0ex\hbox{\bg 0}}}
\newcommand{\sbt}{\,\begin{picture}(-1,1)(-1,-3)\circle*{3}\end{picture}\ }
\renewcommand{\labelenumi}{\rm{(\theenumi)}}
\newfont{\bg}{cmr17 scaled\magstep5}
\numberwithin{equation}{section}
\title{Nilpotence of Frobenius action and the Hodge filtration on local cohomology}
\dedicatory{Dedicated to Professor~Kei-ichi Watanabe on the occasion of his retirement.}
\author{Vasudevan Srinivas}
\address{School of Mathematics, Tata Institute of Fundamental Research, Homi Bhabha Road, Colaba, Mumbai 400005, India}
\email{srinivas@math.tifr.res.in}
\author{Shunsuke Takagi}
\address{Graduate School of Mathematical Sciences, University of Tokyo, 3-8-1 Komaba, Meguro-ku, Tokyo 153-8914, Japan}
\email{stakagi@ms.u-tokyo.ac.jp}
\keywords{$F$-nilpotent singularities, Hodge filtration, divisor class groups, Brauer groups}
\subjclass[2010]{13A35, 13C20, 14C30, 14F22}
\begin{document}

\maketitle
\markboth{V.~SRINIVAS and S.~TAKAGI}{NILPOTENCE OF FROBENIUS ACTION}


\begin{abstract}
An $F$-nilpotent local ring is a local ring $(R, \m)$ of prime characteristic defined by the nilpotence of the Frobenius action on its local cohomology modules $H^i_{\m}(R)$. 
A singularity in characteristic zero is said to be of $F$-nilpotent type if its modulo $p$ reduction is $F$-nilpotent for almost all $p$. 
In this paper, we give a Hodge-theoretic interpretation of three-dimensional normal isolated singularities of $F$-nilpotent type. 
In the graded case, this yields a characterization of these singularities in terms of divisor class groups and Brauer groups. 
\end{abstract}

\section*{Introduction}
$F$-singularities, classes of singularities in positive characteristic defined in terms of the Frobenius morphism, have been studied intensively in recent years. 
They partly conjecturally correspond to singularities in birational geometry in characteristic zero via reduction from characteristic zero to positive characteristic. 
Three classes of $F$-singularities appear in this paper: $F$-rationality, $F$-injectivity and $F$-nilpotence. 
As the name suggests, it follows from a combination of results of Smith \cite{Sm}, Hara \cite{Ha} and Mehta-Srinivas \cite{MS} that a singularity in characteristic zero is a rational singularity if and only if its modulo $p$ reduction is $F$-rational for almost all $p$. 
$F$-injectivity, which is a broader class of singularities than $F$-rationality, is defined by the injectivity of the Frobenius action on its local cohomology modules.
It is conjectured that a singularity in characteristic zero is a Du Bois singularity, which has its origin in Hodge theory, if and only if its modulo $p$ reduction is $F$-injective for infinitely many $p$ (the ``if" part was proved by Schwede \cite{Sc}). 
This conjecture is open even in dimension two, and it has recently turned out in \cite{BST} (see also \cite{MuS}) that it is equivalent to another more arithmetic and wide open conjecture, the so-called weak ordinarity conjecture (see Conjecture \ref{conjWO} for the precise statement). 
Since we do not know how to prove these conjectures at this point, we shift our focus on another class of $F$-singularities, $F$-nilpotence, which is also a broader class of singularities than $F$-rationality. 

Let $(R, \m)$ be a $d$-dimensional excellent normal local ring of positive characteristic $p>0$. 
Then, by a result of Smith \cite{Sm}, there exists a unique maximal proper submodule $0^*_{H^d_{\m}(R)}$ of  the local cohomology module $H^d_{\m}(R)$ stable under the natural Frobenius action.
We say that $R$ is $F$-nilpotent if the natural Frobenius actions on $H^2_{\m}(R), \dots, H^{d-1}_{\m}(R)$ and on $0^*_{H^d_{\m}(R)}$ are all nilpotent. 
It is easy to see that $R$ is $F$-rational if and only if it is $F$-injective and $F$-nilpotent. 
To the best of authors' knowledge, the notion of $F$-nilpotence first appeared in 
\cite{BB}\footnote{In \cite{BB}, the term ``close to $F$-rational" is used for the $F$-nilpotence of singularities.} in order to describe some invariants of singularities in positive characteristic, the so-called Lyubeznik numbers, in terms of \'{e}tale cohomology. 
In this paper, we pursue a geometric interpretation of $F$-nilpotence in the case of isolated singularities. 
We say that a singularity $(x \in X)$ in characteristic zero is of $F$-nilpotent type if its modulo $p$ reduction $(x_p \in X_p)$ is $F$-nilpotent for almost all $p$. 
We then propose the following Hodge-theoretic conjecture. 

\begin{conjD}
Let $(x \in X)$ be an $n$-dimensional normal isolated singularity over the field $\C$ of complex numbers $($and then $H^*_{\{x\}}(X_{\rm an}, \C)$ has a canonical mixed Hodge structure due to Steenbrink \cite{St}$)$. 
Then $(x \in X)$ is of $F$-nilpotent type if and only if the zeroth graded piece $\mathrm{Gr}^0_F H^{i}_{\{x\}}(X_{\rm an}, \C)$ of the Hodge filtration vanishes for all $i$.  
\end{conjD}

In the setting of Conjecture $\mathrm{H}_n$, let $\pi:Y \to X$ be a resolution of singularities with simple normal crossing exceptional divisor $E=\sum_i E_i$ such that $\pi|_{Y \setminus E}: Y \setminus E \to X \setminus \{x\}$ is an isomorphism.  
Then the condition that $\mathrm{Gr}^0_F H^{i}_{\{x\}}(X_{\rm an}, \C)=0$ for all $i$ is equivalent to saying that $H^i(E, \sO_E)=0$ for all $i \ge 1$. 
On the other hand, the $F$-nilpotence of the modulo $p$ reduction $(x_p \in X_p)$ of $(x \in X)$ is equivalent, after extending the residue field to its algebraic closure, to the injectivity of the map $\mathrm{id}-F:H^i_{\m_{x_p}}(\sO_{X_p, x_p}) \to H^i_{\m_{x_p}}(\sO_{X_p, x_p})$ for all $i$, where $\mathrm{id}$ is the identity map and $F$ is the natural Frobenius action on the local cohomology module $H^i_{\m_{x_p}}(\sO_{X_p, x_p})$.  
Therefore, making use of the Artin--Schreier sequence in the \'{e}tale topology, we can see that $(x \in X)$ is of $F$-nilpotent type if and only if the natural Frobenius action on the sheaf cohomology $H^i(E_p, \sO_{E_p})$ of the modulo $p$ reduction $E_p$ of $E$ is nilpotent for all $i \ge 1$ and almost all $p$.  

Here we introduce another more arithmetic conjecture, which is closely related to the weak ordinarity conjecture. 

\begin{conjC}
Let $V$ be an $n$-dimensional projective simple normal crossing variety over an algebraically closed field of characteristic zero. 
Then there exist infinitely many primes $p$ such that for each $i$, the natural Frobenius action on the cohomology group $H^i(V_{p}, \sO_{V_{p}})$ of the modulo $p$ reduction $V_p$ of $V$ is not nilpotent provided $H^i(V, \sO_V)$ is nonzero. 
\end{conjC}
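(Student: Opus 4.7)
The plan is to reduce the statement to a strong form of the weak ordinarity conjecture for the smooth strata of $V$ via the descent spectral sequence. Write $V = \bigcup_{i \in I} V_i$ with $V_i$ the smooth irreducible components, and for each nonempty $J \subseteq I$ set $V_J = \bigcap_{i \in J} V_i$; by the simple normal crossing hypothesis each $V_J$ is smooth and projective. The augmented \v{C}ech resolution
\[
0 \longrightarrow \sO_V \longrightarrow \prod_{|J|=1} (\iota_J)_* \sO_{V_J} \longrightarrow \prod_{|J|=2} (\iota_J)_* \sO_{V_J} \longrightarrow \cdots
\]
yields a natural Mayer--Vietoris spectral sequence
\[
E_1^{a,b} \;=\; \bigoplus_{|J|=a+1} H^b(V_J, \sO_{V_J}) \;\Longrightarrow\; H^{a+b}(V, \sO_V),
\]
which is functorial in $V$ and hence Frobenius-equivariant on the modulo $p$ reduction.

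The key step is to propagate Frobenius bijectivity through the spectral sequence. Suppose $p$ is such that the semilinear Frobenius $F$ acts bijectively on every $E_1^{a,b}$ in the above (mod $p$ reduced) sequence. Each differential $d_r$ commutes with $F$, so $F$ preserves $\Ker d_r$ and $\Image d_r$; bijectivity on $E_r^{a,b}$ together with $F \circ d_r = d_r \circ F$ (and the injectivity of $F$) gives bijectivity on these subspaces, hence on $E_{r+1}^{a,b}$. Inductively $F$ acts bijectively on $E_\infty^{a,b}$ and therefore on every graded piece of the induced filtration on $H^i(V_p, \sO_{V_p})$. Since $H^i(V, \sO_V) \neq 0$ implies $H^i(V_p, \sO_{V_p}) \neq 0$ for almost all $p$ by cohomology and base change, at least one such graded piece is nonzero; a nonzero module admitting a bijective Frobenius cannot have nilpotent Frobenius, and in the presence of a finite Frobenius-stable filtration the same conclusion follows for $H^i(V_p, \sO_{V_p})$ itself.

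The main obstacle is arithmetic: producing infinitely many primes $p$ at which Frobenius is simultaneously bijective on every $H^b(V_{J,p}, \sO_{V_{J,p}})$ for all nonempty $J \subseteq I$. The weak ordinarity conjecture supplies an infinite set of such primes for each single smooth projective $V_J$, but intersecting finitely many such sets is not automatic. Under the density-one strengthening of weak ordinarity (conjecturally true for each $V_J$), the finite intersection still has density one and the argument concludes, so the present conjecture becomes a theorem conditional on that strengthening. An unconditional proof would likely require a genuinely new arithmetic input -- perhaps via $p$-adic Hodge theory for semistable schemes, or an $\ell$-independence statement for the Frobenius eigenvalues on $H^*(V_p)$ -- that bypasses the need for common ordinary primes altogether.
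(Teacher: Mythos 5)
The statement you are trying to prove is a conjecture, and the paper does not prove it for general $n$ either: it is introduced as Conjecture $\mathrm{N}_n$, and what the paper actually establishes is the case $n\le 2$ (Proposition \ref{conjN2}). Your reduction---descend Frobenius-bijectivity through the Mayer--Vietoris spectral sequence $E_1^{a,b}=\bigoplus_{|J|=a+1}H^b(V_J,\sO_{V_J})\Rightarrow H^{a+b}(V,\sO_V)$ and conclude non-nilpotence on the abutment from ordinarity on all smooth strata---is correct as far as it goes, and the propagation argument through $\Ker d_r/\Image d_r$ is sound for $p$-linear maps on finite-dimensional spaces. But the conclusion you reach (``the conjecture holds assuming a strengthened weak ordinarity for all the strata simultaneously'') is essentially the implication that the paper simply cites: in Remark~\ref{remark conjN}(1) they note, via \cite[Lemma~4.5]{BST}, that the weak ordinarity conjecture (Conjecture~\ref{conjWO}) implies Conjecture $\mathrm{N}_n$. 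Your worry about intersecting dense sets of primes across the finitely many strata $V_J$ is legitimate at the level of Zariski-dense subsets, but it is resolved in \cite{MuS} and \cite{BST} by showing that weak ordinarity for a single variety is equivalent to a uniform statement that behaves well under finite disjoint unions, so no genuinely stronger conjectural input is needed beyond Conjecture~\ref{conjWO}.

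What you miss is the paper's actual unconditional content. In Proposition~\ref{conjN2} the same Mayer--Vietoris spectral sequence is used, but it degenerates at $E_2$ after base change to $\C$, and instead of demanding bijectivity on \emph{every} $E_1$ term, the authors exploit surjections $E^i\twoheadrightarrow E_2^{0,i}$ (and the identification of $E_2^{1,1}$, $E_2^{0,1}$ with $H^1$ of abelian varieties via Albanese, and of $E_2^{2,0}$, $E_2^{1,0}$ with purely combinatorial groups on which Frobenius is automatically bijective). The arithmetic input is Lemma~\ref{p-adicHodge}: for a smooth projective variety over a number field, non-nilpotence of Frobenius on $H^1(\sO)$ at a density-one set of primes follows from Serre's result on abelian varieties, and on $H^2(\sO)$ at a positive-density set of primes follows from an Ogus-style argument combining Katz--Messing, the Weil bounds, Chebotarev, and Faltings's $p$-adic Hodge theory. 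It is precisely this weaker-than-ordinarity, non-nilpotence-only input---attainable unconditionally for $H^1$ and $H^2$---that lets the paper settle $\mathrm{N}_1$ and $\mathrm{N}_2$ without any weak-ordinarity hypothesis, which in turn drives the Main Theorem (Conjecture $\mathrm{H}_3$). Your approach, by contrast, insists on full Frobenius-bijectivity on all $E_1^{a,b}$, which forces the conjectural dependence even in low dimension.
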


Although the weak ordinarity conjecture is open even in the case of curves, Conjectures $\mathrm{N}_1$ and $\mathrm{N}_2$ are known to be true, which is essentially due to Serre \cite{Se} and Ogus \cite{Og} (see also \cite{JR}). 
It follows from an application of Conjecture $\mathrm{N}_{n-1}$ to the above $E$ that the natural Frobenius action on $H^i(E_{p}, \sO_{E_{p}})$ is nilpotent for almost all $p$ if and only if $H^i(E, \sO_E)$ vanishes.  
Thus, summing up the above discussion, we obtain the following theorem. 

\begin{mainthm}[\textup{=Theorem \ref{main result}}]
If Conjecture $\mathrm{N}_{n-1}$ holds, then Conjecture $\mathrm{H}_n$ holds as well. 
In particular, Conjecture $\mathrm{H}_3$ holds true. 
\end{mainthm}

As an application, we give a characterization of two-dimensional normal singularities of $F$-nilpotent type in terms of divisor class groups. 
That is, let $(R, \m)$ be a two-dimensional normal local ring essentially of finite type over an algebraically closed field of characteristic zero. 
Then $R$ is of $F$-nilpotent type if and only if the divisor class group $\mathrm{Cl}(\widehat{R})$ of the $\m$-adic completion $\widehat{R}$ does not contain the torsion group $\Q/\Z$ (Theorem \ref{divisor class group}). 
This should be compared with Lipman's characterization of two-dimensional rational singularities in terms of divisor class groups \cite{Li}. 
We also have a similar characterization of three-dimensional normal graded isolated singularities of $F$-nilpotent type in terms of Brauer groups (Theorem \ref{Brauer}). 

\begin{small}
\begin{acknowledgement}
The authors are indebted to Bhargav Bhatt for valuable comments, and from whom they learned Lemma \ref{p-adicHodge}. 
The second author is also grateful to Osamu Fujino and Kazuma Shimomoto for helpful comments on an earlier draft of this paper.  
He would like to thank Taro Fujisawa, Yoshinori Gongyo,  Atsushi Ito, Yujiro Kawamata and Yoichi Mieda for useful discussions. 
The first author was supported by a J. C. Bose Fellowship of the Department of Science and Technology, India.
The second author was partially supported by Grant-in-Aid for Young Scientists (B) 23740024 from JSPS. 
The material is based upon work supported by the National Science Foundation under Grant No. 0932078 000, initiated while the authors were in residence at the Mathematical Science Research Institute (MSRI) in Berkeley, California, during the spring semester 2013.
\end{acknowledgement}
\end{small}


\section{Preliminaries}
In this section, we will recall some definitions and results that we will need in later sections. 
\subsection{Du Bois singularities}
First we recall the definition of Du Bois singularities, originally introduced by Steenbrink to study degenerations of variations of Hodge structures. 

Let $(x \in X)$ be a reduced singularity over an algebraically closed field $k$ of characteristic zero. 
A \textit{log resolution} (resp.~a \textit{projective log resolution}) of $(x \in X)$ is a proper (resp. projective) birational morphism $\pi:Y \to X$ with $Y$ smooth such that the closed fiber $\pi^{-1}(x)$ and the exceptional locus $\mathrm{Exc}(\pi)$ of $\pi$ are divisors on $Y$ and that $\mathrm{Exc}(\pi) \cup \pi^{-1}(x)$ are simple normal crossing divisors. 

\begin{defn}
Let $(x \in X)$ be a singularity over an algebraically closed field of characteristic zero and $\underline{\Omega}_X^0$ be the zeroth graded piece of the Du Bois complex $\underline{\Omega}_X^{\sbt}$ (see \cite{DB} for the original definition of $\underline{\Omega}_X^{0}$ and \cite{Sc2} for its alternative characterization). 
We say that $(x \in X)$ is \textit{Du Bois} if the canonical map $\sO_{X, x} \to \underline{\Omega}_{X,x}^0$ is a quasi-isomorphism. 
\end{defn}

There exists a simple characterization of Du Bois singularities in the case of normal isolated singularities \cite{St}:  
let $(x \in X)$ be a normal isolated singularity over an algebraically closed field of characteristic zero. 
Take a log resolution $\pi:Y \to X$ of $(x \in X)$, and put $E:=\pi^{-1}(x)_{\rm red}$. 
Then $(x \in X)$ is Du Bois if and only if the natural map $(R^if_*\sO_Y)_x \to H^i(E, \sO_E)$ is an isomorphism for all $i \ge 1$. 

\subsection{$F$-rational and $F$-injective rings}
In this subsection, we briefly review the definitions of $F$-rational and $F$-injective rings and their basic properties. 

In this paper, all rings are excellent commutative rings with unity. 
For a ring $R$, we denote by $R^{\circ}$ the set of elements of $R$ which are not in any minimal prime ideal. 
Let $R$ be a ring of prime characteristic $p$ and $F:R \to R$ be the Frobenius map sending $x \in R$ to $x^p \in R$. 
If $(R, \m)$ is local, then the Frobenius map $F$ induces a $p$-linear map $H^i_{\m}(R) \to H^i_{\m}(R)$ for each $i$, which we denote by the same letter $F$. 
The $e$-th iteration of $F$ is denoted by $F^e$. 
\begin{defn}
Let $(R, \m)$ be a $d$-dimensional local ring of characteristic $p>0$. 
\begin{enumerate}[(i)]
\item We say that $R$ is \textit{$F$-injective} if $F: H^i_{\m}(R) \to H^i_{\m}(R)$ is injective for all $i$. 
\item We say that $R$ is \textit{$F$-rational} if $R$ is Cohen-Macaulay and if for any $c \in R^{\circ}$, there exists $e \in \N$ such that $cF^e: H^d_{\m}(R) \to H^d_{\m}(R)$ is injective. 
\end{enumerate}
\end{defn}

It is immediate from definition that $F$-rational rings are $F$-injective. 

\begin{defn}
Let $(R,\m)$ be a $d$-dimensional reduced local ring of characteristic $p>0$. 
\begin{enumerate}[(i)]
\item 
The \textit{Frobenius closure} $0^{F}_{H^i_{\m}(R)}$ of the zero submodule in $H^i_{\m}(R)$ is the submodule of $H^i_{\m}(R)$ consisting of all elements $z \in H^i_{\m}(R)$ for which there exists $e \in \N$ such that $F^e(z)=0$.

\item 
The \textit{tight closure} $0^{*}_{H^d_{\m}(R)}$ of the zero submodule in $H^d_{\m}(R)$ is the submodule of $H^d_{\m}(R)$ consisting of all elements $z \in H^d_{\m}(R)$ for which there exists $c \in R^{\circ}$ such that $cF^e(z)=0$ for all large $e \in \N$. 

\item 
An element $c \in R^{\circ}$ is a \textit{parameter test element} if $cF^e(z)=0$ for all $e \in \N$ whenever $z \in 0^{*}_{H^d_{\m}(R)}$. 

\item 
The \textit{parameter test submodule} $\tau(\omega_R)$ is defined by $\tau(\omega_R)=\mathrm{Ann}_{\omega_R} 0^{*}_{H^d_{\m}(R)} \subseteq \omega_R$. 
\end{enumerate}
\end{defn}

\begin{rem}
(1) By the definition of $F$-rational rings,  $R$ is $F$-rational if and only if $R$ is Cohen-Macaulay and  $0^{*}_{H^d_{\m}(R)}=0$ (equivalently, $\tau(\omega_R)=\omega_R$). 

(2) (\cite{Sm}) When $R$ is analytically irreducible, $0^{*}_{H^d_{\m}(R)}$ is the unique maximal proper $R$-submodule of $H^d_{\m}(R)$ stable with the Frobenius action $F$. 
\end{rem}

Let $R$ be a ring of prime characteristic $p$. 
Given an $R$-module $M$, we will use $F^e_*M$ to denote the $R$-module which agrees with $M$ as an additive group, but where the multiplication is defined by $r \cdot m =r^{p^e} m$. 
This notation is justified since if $X=\Spec R$ and $\mathcal{M}$ is the quasi-coherent $\sO_X$-module corresponding to $M$, then the push-forward $F^e_*\mathcal{M}$ is the quasi-coherent
sheaf corresponding to $F^e_*M$. 
We say that $R$ is \textit{$F$-finite} if $F^1_*R$ is a finitely generated $R$-module. 

Let $F^e:R \to F^e_*R$ be the $e$-th iteration of the Frobenius map. 
The $e$-th iteration $\mathrm{Tr}^e_R:F^e_*\omega_R \to \omega_R$ of the \textit{trace map} is the $\omega_R$-dual of $F^e$. 

\begin{lem}\label{parameter test submodule}
Let $(R,\m)$ be an $F$-finite reduced local ring of prime characteristic $p$. 
\begin{enumerate}
\item $($\cite[Proposition 3.2 (e)]{Bl2}$)$ $\tau(\omega_R)_P=\tau(\omega_{R_{P}})$ for all $P \in \Spec R$. 
\item Let $(R, \m) \hookrightarrow (S, \n)$ be a flat local homomorphism of $F$-finite reduced local rings of characteristic $p>0$. 
If $S/\m S$ is a field which is a separable algebraic extension of $R/\m$, then $\tau(\omega_R) \otimes_R S=\tau(\omega_S)$. 
\end{enumerate}
\end{lem}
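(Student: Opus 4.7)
Part (1) is cited directly from \cite[Proposition 3.2 (e)]{Bl2}, so the main work is part (2). The plan is to exploit a trace-based characterization of the parameter test submodule: for an $F$-finite reduced local ring $(R, \m)$ and any parameter test element $c \in R^\circ$, one has
$$\tau(\omega_R) = \bigcup_{e \ge 0} \mathrm{Tr}^e_R(F^e_* c\omega_R),$$
and likewise for $\tau(\omega_S)$. I would then show that each ingredient on the right is compatible with the base change $R \to S$, so that the desired equality follows by flatness.

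The key compatibilities are the following. Since $R \hookrightarrow S$ is flat local with $\m S = \n$ (as $S/\m S$ is a field) and the fiber is a separable algebraic extension of $R/\m$, the homomorphism is regular. By flat base change with Gorenstein fiber, $\omega_R \otimes_R S \cong \omega_S$; and by flat base change for local cohomology, $H^i_\m(R) \otimes_R S \cong H^i_\n(S)$ compatibly with Frobenius. More substantively, the natural map $F^e_* R \otimes_R S \to F^e_* S$ is an isomorphism --- this is where the separability of the residue field extension together with the $F$-finiteness hypothesis come in crucially. Combined with the first point, it yields $F^e_*\omega_R \otimes_R S \cong F^e_*\omega_S$ in a way that intertwines the trace maps: $\mathrm{Tr}^e_R \otimes_R \mathrm{id}_S = \mathrm{Tr}^e_S$ under the identification. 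Finally, by the Hochster--Huneke theorem on the compatibility of tight closure with regular homomorphisms, $0^*_{H^d_\n(S)} = 0^*_{H^d_\m(R)} \otimes_R S$ (with $d = \dim R = \dim S$), and so the image in $S$ of the parameter test element $c$ remains a parameter test element of $S$.

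Granted these, the conclusion is formal: flatness of $R \to S$ allows us to interchange $\otimes_R S$ with the union and with taking images, yielding
$$\tau(\omega_R) \otimes_R S \;=\; \bigcup_e \mathrm{Tr}^e_R(F^e_* c\omega_R) \otimes_R S \;=\; \bigcup_e \mathrm{Tr}^e_S(F^e_* c\omega_S) \;=\; \tau(\omega_S).$$
The main obstacle is the Frobenius base change isomorphism $F^e_* R \otimes_R S \cong F^e_* S$; without the separability of the residue field extension it fails in general (already for a purely inseparable field extension $\ell/k$, the natural inclusion $F_* k \otimes_k \ell \hookrightarrow F_* \ell$ is strict), so the separability hypothesis in the lemma is used in an essential way.
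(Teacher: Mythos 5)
Your proposal follows essentially the same route as the paper's: both hinge on the trace-image description of $\tau(\omega_R)$ as the image of $\bigoplus_{e\ge 0}\mathrm{Tr}^e_R(F^e_*(c\,\omega_R))$ and on the Frobenius base-change isomorphism $F^e_*R \otimes_R S \cong F^e_*S$, with separability entering exactly where you indicate. The paper supplies the details you only cite --- it first reduces to $S$ complete and then proves the Frobenius isomorphism by realizing $S$ as the completion of $\widehat{R}\otimes_k K$ and writing the separable algebraic extension $K/k$ as a filtered colimit of finite \'etale extensions --- but the overall approach is the same.
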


\begin{proof}
We include the proof of (2), because a reference to this fact is not easily found. 
Since $\tau(\omega_S) \otimes_S \widehat{S} \cong \tau(\omega_{\widehat{S}})$ where $\widehat{S}$ denotes the $\mathfrak{n}$-adic completion of $S$ (cf.~ \cite[Proposition 3.2]{HT}), we may assume that $S$ is complete.
First we will show the following claim. 

\begin{cl}
The natural map $F^e_*R \otimes_R S \to F^e_*S$ is an isomorphism for each $e \ge 1$.  
\end{cl}
\begin{proof}[Proof of Claim]
Let $k=R/\m$, $K=S/\m S$ and $\widehat{R}$ be the $\m$-adic completion of $R$. 
By Cohen's structure theorem, there exists an embedding $k \to \widehat{R}$. 
Let $T=\widehat{R} \otimes_k K$ and $\widehat{T}$ be the $\m T$-adic completion of $T$. 
Then by \cite[Proposition 3.4]{Scho}, there exists an isomorphism of $R$-algebras $S \cong \widehat{T}$. 
It immediately follows from the $F$-finiteness of $R$ (resp. $T$) that $F^e_* R \otimes_R \widehat{R} \cong F^e_*\widehat{R}$ (resp. $F^e_* T \otimes_T \widehat{T} \cong F^e_*\widehat{T}$). 
Therefore, it suffices to show that $F^e_*\widehat{R} \otimes_{\widehat{R}} T \cong F^e_*T$. 
Since $K/k$ is a separable algebraic extension, we can construct a direct system $\{T_{\lambda}\}_{\lambda \in \Lambda}$ of finite \'{e}tale extensions $T_{\lambda}$ of $\widehat{R}$ such that $\varinjlim_{\lambda \in \Lambda} T_{\lambda}=T$. 
Using the fact that $T_{\lambda}$ is an \'{e}tale extension of $\widehat{R}$, we see that the natural map $F^e_*\widehat{R} \otimes_{\widehat{R}} T_{\lambda} \to F^e_*T_{\lambda}$ is an isomorphism. 
Thus, 
\[
F^e_*\widehat{R} \otimes_{\widehat{R}} T = F^e_*\widehat{R} \otimes_{\widehat{R}} \varinjlim_{\lambda \in \Lambda} T_{\lambda} \cong \varinjlim_{\lambda \in \Lambda} (F^e_*\widehat{R} \otimes_{\widehat{R}} T_{\lambda}) \cong \varinjlim_{\lambda \in \Lambda} F^e_*T_{\lambda} \cong F^e_* T.
\]
\end{proof}

We are now ready to prove the assertion. Let $c \in R^{\circ}$ be a common parameter test element for $R$ and $S$. 
By an argument very similar to the proof of \cite[Lemma 2.1]{HT}, $\tau(\omega_R)$ coincides with the image of 
\[
\bigoplus_{e \ge 0} \mathrm{Tr}_R^e(F^e_*(c \omega_R)) \to \omega_R, 
\]
where $\mathrm{Tr}_R^e: F^e_*\omega_R \to \omega_R$ is the $e$-th iteration of the trace map on $R$. 
Note that $\omega_R \otimes_R S \cong \omega_S$, because $R \hookrightarrow S$ is a flat local homomorphism and its fiber ring is a field. 
By the above claim, one has 
\[
F^e_*\omega_R \otimes_R S \cong F^e_*\omega_R \otimes_{F^e_*R} F^e_*R \otimes_R S \cong 
F^e_*\omega_R \otimes_{F^e_*R} F^e_*S \cong F^e_*\omega_S,
\]
so that $\mathrm{Tr}_R^e \otimes_R S:F^e_*\omega_R \otimes_R S \to \omega_R \otimes_R S$ is isomorphic to the $e$-th iteration $\mathrm{Tr}_S^e:F^e_*\omega_S \to \omega_S$ of the trace map on $S$. 
Thus, under the isomorphism $\omega_R \otimes S \cong \omega_S$, $\tau(\omega_R) \otimes_R S$ is identified with the image of 
\[
\bigoplus_{e \ge 0} \mathrm{Tr}_S^e(F^e_*(c \omega_S)) \to \omega_S, 
\]
which coincides with $\tau(\omega_S)$ by an argument similar to the proof of \cite[Lemma 2.1]{HT} again. 
\end{proof}

We define the notion of $F$-singularities in characteristic zero, using reduction from characteristic zero to positive characteristic. 

Let $(x \in X)$ be a singularity over a field $k$ of characteristic zero. 
Choosing a suitable finitely generated $\Z$-subalgebra $A \subseteq k$, we can construct a (non-closed) point $x_A$ of a scheme $X_A$ of finite type over $A$ such that $(X_A, x_A) \times_{\Spec A} \Spec k \cong (X, x)$.  
By the generic freeness, we may assume that $X_A$ and $x_A$ are flat over $\Spec A$. 
We refer to $(X_A, x_A)$ as a \textit{model} of $(X, x)$ over $A$. 
Given a closed point $\mu \in \Spec A$, we denote by $(X_{\mu}, x_{\mu})$ the fiber of $(X, x)$ over $\mu$. 
Then $X_{\mu}$ is a scheme of finite type over the residue field $\kappa(\mu)$ of $\mu$, which is a finite field. 

Similarly, we can reduce a projective resolution $\pi:Y \to X$ of $(x \in X)$ and divisors on $Y$ to positive characteristic. 
The reader is referred to \cite[Chapter 2]{HH} and \cite[Section 3.2]{MuS} for more detail on reduction from characteristic zero to characteristic $p$. 

\begin{defn}
Let $(x \in X)$ be a singularity over a field $k$ of characteristic zero. 
Let $\mathbf{P}$ denote a property of local rings essentially of finite type over finite fields which is stable under finite field extensions. 
Suppose that we are given a model of $(X, x)$ over a finitely generated $\Z$-subalgebra $A$ of $k$. 
\renewcommand{\labelenumi}{(\roman{enumi})}
\begin{enumerate}
\item $(x \in X)$ is said to be of $\mathbf{P}$ \textit{type} if there exists a dense open subset $S \subseteq \Spec A$ such that $\sO_{{X_\mu}, {x_{\mu}}}$ satisfies the property $\mathbf{P}$ for all closed points $\mu \in S$. 
\item $(x \in X)$ is said to be of \textit{dense $\mathbf{P}$ type} if there exists a dense subset of closed points $S \subseteq \Spec A$ such that $\sO_{{X_\mu}, {x_{\mu}}}$ satisfies the property $\mathbf{P}$ for all $\mu \in S$. 
\end{enumerate}
\end{defn}

This definition is independent of the choice of representative of the germ and the choice of its model. 

\begin{thm}\label{correspondence}
Let $(x \in X)$ be a reduced singularity over a field of characteristic zero. 
\begin{enumerate}
\item $($\cite{Ha}, \cite{MS}, \cite{Sm}$)$ 
$(x \in X)$ is a rational singularity if and only if it is of $F$-rational type. 
\item $($\cite{Sc}$)$ If $(x \in X)$ is of dense $F$-injective type, then it is a  Du Bois singularity. 
\end{enumerate}
\end{thm}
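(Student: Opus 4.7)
\medskip

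\textbf{Plan for (1), ``$F$-rational type $\Leftrightarrow$ rational''.} Both directions are reduction-mod-$p$ statements, bridged by a comparison of two cohomological vanishings at the resolution. Pick a projective log resolution $\pi:Y\to X$ with reduced exceptional fiber $E=\pi^{-1}(x)_{\mathrm{red}}$ and spread everything out over a finitely generated $\Z$-subalgebra $A\subseteq k$. Rationality is equivalent to $X$ being Cohen--Macaulay together with $R^i\pi_*\sO_Y=0$ for $i\ge 1$, or by local/Grothendieck duality, to $R^i\pi_*\omega_Y=0$ for $i\ge 1$ and $\pi_*\omega_Y=\omega_X$. On the other hand, $F$-rationality at $x_\mu$ is equivalent, via Matlis/local duality, to $\tau(\omega_{\sO_{X_\mu,x_\mu}})=\omega_{\sO_{X_\mu,x_\mu}}$, i.e.\ to $0^*_{H^d_{\m}(\sO_{X_\mu,x_\mu})}=0$.

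For the direction ``$F$-rational type $\Rightarrow$ rational,'' I would follow Smith's argument \cite{Sm}: an $F$-rational local ring is Cohen--Macaulay and pseudo-rational, and Cohen--Macaulayness plus pseudo-rationality spreads from a dense set of closed fibers back up to the generic fiber by flatness and upper-semicontinuity of fiber dimension, yielding the desired vanishing $R^i\pi_*\sO_Y=0$ for $i\ge 1$.

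For the converse, ``rational $\Rightarrow$ $F$-rational type,'' I would run the Hara--Mehta--Srinivas strategy \cite{Ha,MS}. Starting from $R^i\pi_*\omega_Y=0$ in characteristic $0$, I would lift the pair $(Y_A,E_A)$ to a smooth proper scheme equipped with a simple normal crossing divisor over $\Spec A$, and then apply, for $p=\operatorname{char}\kappa(\mu)\gg 0$, the Deligne--Illusie degeneration of the Hodge-to-de Rham spectral sequence modulo $p$ (more precisely its log/relative version) to deduce a Kodaira--Akizuki--Nakano-type vanishing $R^i\pi_{\mu *}\omega_{Y_\mu}=0$ on the characteristic-$p$ fibers. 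Combined with local duality on $\sO_{X_\mu,x_\mu}$ and the description of $0^*_{H^d_\m}$ via the trace of Frobenius on $\omega$ (using Lemma \ref{parameter test submodule} to pass freely to the completion and to separable closures of the residue field), this forces $\tau(\omega_{\sO_{X_\mu,x_\mu}})=\omega_{\sO_{X_\mu,x_\mu}}$ for all $\mu$ in a dense open of $\Spec A$. The Deligne--Illusie input is where the real work sits: it is what restricts the argument to $p\gg 0$ and is the main obstacle.

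\medskip

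\textbf{Plan for (2), ``dense $F$-injective type $\Rightarrow$ Du Bois''.} Here I would reproduce Schwede's route \cite{Sc}. Fix a projective log resolution $\pi:Y\to X$, let $E=\pi^{-1}(x)_{\mathrm{red}}$, and recall that for a normal isolated singularity the Du Bois condition is equivalent to $(R^i\pi_*\sO_Y)_x \xrightarrow{\sim} H^i(E,\sO_E)$ for all $i\ge 1$, i.e.\ to the surjectivity of the restriction $\pi_*\sO_Y\twoheadrightarrow \sO_E$ together with the vanishing of the cone on $R\pi_*\sO_Y\to R\Gamma(E,\sO_E)$. Spreading $(X_A,Y_A,E_A)$ out over $A$, I would reduce modulo $p$ at those $\mu$ in the dense set for which $\sO_{X_\mu,x_\mu}$ is $F$-injective. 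For such $\mu$, the injectivity of $F$ on $H^i_\m(\sO_{X_\mu,x_\mu})$ is, by Matlis duality, equivalent to the surjectivity of the Frobenius trace on $\omega_{\sO_{X_\mu,x_\mu}}$, which in turn gives an $\sO_{X_\mu}$-linear splitting of $\sO_{X_\mu}\to F_*\sO_{X_\mu}$ modulo the image of an appropriate map. Schwede's key observation is that such a splitting is precisely what is needed to conclude that the natural map $\sO_{X_\mu}\to \underline{\Omega}_{X_\mu}^0$ (in the relevant positive-characteristic substitute for the Du Bois complex) is a split injection, hence a quasi-isomorphism. Semi-continuity of the cohomology of $R\pi_*\sO_{Y_A}$ and of $R\Gamma(E_A,\sO_{E_A})$ over $\Spec A$, applied along the dense set of $F$-injective closed fibers, then lifts the Du Bois condition to the generic fiber. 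The main obstacle in this half is that ``dense'' type is weaker than ``open'' type, so one must be careful to only use semicontinuity statements that are insensitive to passing to a dense, rather than open, subset of $\Spec A$.
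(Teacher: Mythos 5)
The paper treats Theorem~\ref{correspondence} purely as a cited result: no proof is given, and the two parts are attributed to \cite{Sm,Ha,MS} and \cite{Sc} respectively. So there is no in-paper argument to compare yours against, and what you have written is a sketch of the external proofs rather than a reconstruction of anything in the source.

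For part (1), your outline of the forward direction is broadly the Smith route ($F$-rational $\Rightarrow$ pseudo-rational), but ``upper-semicontinuity of fiber dimension'' is not the relevant mechanism; what one actually uses is semicontinuity (over $\Spec A$) of the cohomology sheaves $R^i\pi_{A*}\sO_{Y_A}$, together with the fact that the spread-out resolution remains a resolution on the closed fibers for $\mu$ in a dense open. Your converse direction correctly identifies the Deligne--Illusie degeneration (in its log/relative form) as the substantive input in Hara's and Mehta--Srinivas's arguments.

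For part (2), your sketch departs more seriously from Schwede's actual proof. First, there is no ``positive-characteristic substitute for the Du Bois complex'' in \cite{Sc}; Schwede works directly with the log-resolution criterion, namely that $X$ is Du Bois iff the natural map $\sO_X\to R\pi_*\sO_E$ is a quasi-isomorphism, where $E$ is the reduced preimage of the singular locus in a log resolution of an embedding. Second, and more importantly, you conflate $F$-injectivity with $F$-purity/$F$-splitting: injectivity of Frobenius on $H^i_\m(R)$ is Matlis-dual to surjectivity of the trace on the cohomology of the dualizing complex, but this does not give an $\sO$-linear splitting of $\sO\to F_*\sO$ (that would be $F$-purity, a strictly stronger condition, and $F$-injectivity does not imply it). Schwede's argument instead uses $F$-injectivity together with the Frobenius action on the hypercohomology of $R\pi_*\sO_E$ mod $p$, a diagram chase showing the relevant comparison map is an isomorphism for those primes, and then semicontinuity over $\Spec A$ (which, as you note, must be applied carefully since ``dense'' type is a priori weaker than ``open'' type) to lift the Du Bois condition to characteristic zero.
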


The converse of Theorem \ref{correspondence} (2) is also expected to hold. 

\begin{conj}\label{conjDB}
Let $(x \in X)$ be a reduced singularity over an algebraically closed field of characteristic zero. 
If $(x \in X)$ is Du Bois, then it is of dense $F$-injective type. 
\end{conj}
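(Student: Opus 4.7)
The plan is to reduce Conjecture~\ref{conjDB} to a weak ordinarity statement for the exceptional divisor of a log resolution, in parallel with the reduction of Conjecture $\mathrm{H}_n$ to Conjecture $\mathrm{N}_{n-1}$ used in the Main Theorem. I would first restrict to the case of a normal isolated Du Bois singularity $(x \in X)$; the general case presumably requires working with the full Du Bois complex $\underline{\Omega}_X^{\sbt}$ rather than Steenbrink's criterion, but the isolated normal case already contains the essential arithmetic difficulty.

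Choose a projective log resolution $\pi \colon Y \to X$ with simple normal crossing exceptional divisor $E = \pi^{-1}(x)_{\rm red}$, and spread everything out to a finitely generated $\Z$-subalgebra $A \subseteq k$. By Steenbrink's characterization recorded above, the Du Bois hypothesis gives the isomorphism $(R^i\pi_*\sO_Y)_x \cong H^i(E, \sO_E)$ for all $i \geq 1$, and this isomorphism persists over the fibers for almost every closed point $\mu \in \Spec A$. The spectral sequence for local cohomology through $R\pi_*$, together with normality to control the bottom piece, then identifies the Frobenius action on $H^i_{\m_{x_\mu}}(\sO_{X_\mu, x_\mu})$, Frobenius-equivariantly, with that on $H^{i-1}(E_\mu, \sO_{E_\mu})$ for $i \geq 2$. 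After passing to the algebraic closure of $\kappa(\mu)$, $F$-injectivity of $\sO_{X_\mu, x_\mu}$ is therefore equivalent to the injectivity (equivalently, the bijectivity, since these are finite-dimensional vector spaces with a $p$-semilinear endomorphism) of Frobenius on each $H^i(E_\mu, \sO_{E_\mu})$.

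The conjecture thus reduces to showing that for a projective simple normal crossing variety $E$ over $\C$, there is a dense set of closed points $\mu \in \Spec A$ at which Frobenius acts bijectively on every $H^i(E_\mu, \sO_{E_\mu})$. This is precisely the weak ordinarity conjecture, and it is the main obstacle. It is wide open even for smooth projective curves of genus $\geq 2$ and is known only in a few classical cases (elliptic curves via Serre, abelian varieties via Ogus, K3 surfaces, and related settings). Everything before this final step is a systematic elaboration of tools already present in the Main Theorem and in~\cite{BST},~\cite{MuS}; the genuinely new input required for Conjecture~\ref{conjDB} is an arithmetic ordinarity statement that appears to lie beyond current techniques.
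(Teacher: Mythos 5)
This statement is labeled as a conjecture in the paper, and the paper does not prove it; the only thing the paper records about it is Proposition~1.8, which cites \cite{BST} for the \emph{equivalence} of Conjecture~\ref{conjDB} with the weak ordinarity conjecture (Conjecture~\ref{conjWO}). So you are right that there is no complete argument to be had, and you are also right that the reduction you sketch is of the correct type: both directions of the known equivalence go through exactly this kind of spreading out a log resolution, comparing local cohomology of the special fiber with coherent cohomology of the exceptional fiber via the Artin--Schreier sequence, and landing on a Frobenius semisimplicity/ordinarity statement. Ending the argument at ``this is weak ordinarity, which is open'' is the honest and correct conclusion, and it matches the paper's own position.

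That said, two points in the reduction as you wrote it would not survive scrutiny and are worth fixing. First, the assertion that $F$-injectivity of $\sO_{X_\mu,x_\mu}$ is equivalent to bijectivity of Frobenius on each $H^i(E_\mu,\sO_{E_\mu})$ because ``these are finite-dimensional vector spaces'' fails for the top local cohomology. For a $d$-dimensional normal isolated singularity, $H^i_{\m}(R)$ has finite length only for $i<d$; $H^d_{\m}(R)$ is not finite-dimensional, and injectivity of Frobenius there is a genuinely separate condition that is not read off from $H^{d-1}(E,\sO_E)$ alone. The short exact sequence
\[
0 \to H^{d-1}(Y,\sO_Y) \to H^{d-1}(Y\setminus E,\sO_Y) \to H^d_E(Y,\sO_Y) \to 0
\]
shows that the Frobenius module $H^d_{\m}(R)\cong H^{d-1}(Y\setminus E,\sO_Y)$ has the Matlis dual of $H^0(Y,\omega_Y)$ as a quotient, and controlling Frobenius there requires a duality/trace-map argument, not a finite-dimensional dichotomy into semisimple and nilpotent parts. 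Second, the restriction to normal isolated singularities genuinely loses content: the conjecture is stated for arbitrary reduced germs, and the reduction in \cite{BST} (and in \cite{MuS}) uses the full Du Bois complex and hyperresolutions precisely because Steenbrink's criterion is only available in the isolated normal case. If your goal is to reprove the equivalence rather than just gesture at it, you would need to incorporate both of these, as \cite{BST} does.
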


Conjecture \ref{conjDB} is open even in dimension two and is equivalent to another more arithmetic conjecture. 

\begin{conj}[\textup{Weak Ordinarity Conjecture \cite{MuS}}]\label{conjWO}
Let $V$ be an $n$-dimensional smooth projective variety over an algebraically closed field $k$ of characteristic zero. 
Given a model of $V$ over a finitely generated $\Z$-subalgebra $A$ of $k$, there exists a dense subset of closed points $S \subseteq \Spec A$ such that the natural Frobenius action on $H^n(V_{\mu}, \sO_{V_{\mu}})$ is bijective for all $\mu \in S$. 
\end{conj}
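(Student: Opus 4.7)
The plan is to reformulate the conjecture in terms of crystalline cohomology and then reduce it to a density statement about Galois representations, at which point the argument unavoidably bumps into a deep open problem.

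First, after enlarging $A$ I may assume that the model $V_A$ is smooth and proper over $\Spec A$. For a closed point $\mu \in \Spec A$ of residue characteristic $p$, the action of Frobenius on $H^n(V_\mu, \sO_{V_\mu})$ can be identified with the action of Frobenius on the slope-zero (unit-root) quotient of the crystalline cohomology $H^n_{\mathrm{crys}}(V_\mu/W(\kappa(\mu)))$, via the Hodge filtration and the conjugate spectral sequence (which degenerates for $p$ large by Deligne--Illusie). Bijectivity of Frobenius on $H^n(V_\mu, \sO_{V_\mu})$ is thus equivalent to the slope-zero part of the Newton polygon of $H^n_{\mathrm{crys}}(V_\mu/W)$ having dimension equal to the Hodge number $h^n(V, \sO_V)$, i.e.\ to $V_\mu$ being \emph{ordinary in degree $n$}. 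The conjecture is therefore a statement that the locus of closed points of ordinarity in degree $n$ is dense in $\Spec A$.

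Second, after spreading out further, I would attempt to descend $V$ to a smooth projective variety over a number field $K$ and package its $\ell$-adic \'{e}tale cohomology into a strictly compatible system $\rho_\ell\colon G_K \to \mathrm{GL}(H^n_{\mathrm{\acute et}}(V_{\overline K}, \Q_\ell))$. Ordinarity of $V_\mu$, for $\mu$ lying over a prime $\mathfrak{p}$ of $K$, translates into the condition that the $p$-adic valuations of the eigenvalues of geometric Frobenius $\mathrm{Frob}_\mathfrak{p}$ on $H^n_{\mathrm{crys}}$ attain $0$ with the maximal possible multiplicity $h^n(V, \sO_V)$. A natural strategy is then to apply Chebotarev's theorem to the image of $\rho_\ell$ and to try to exhibit conjugacy classes whose characteristic polynomial has enough unit roots; more refined variants would work in the Fontaine--Laffaille or Breuil--Kisin categories and study the action of $\varphi$ on a lattice containing the Hodge filtration quotient $\sO_V$.

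The hard part, and the reason this remains a conjecture, is the last step: to execute such a Chebotarev argument one needs enough control on the image of the Galois representation $\rho_\ell$ to guarantee that the unit-root stratum is \emph{not} empty in a Zariski-dense way. For a general smooth projective variety no such description of the image is known; one essentially needs a Mumford--Tate or motivic Galois conjecture together with strong independence-of-$\ell$ statements, none of which are available. This is precisely why the conjecture is open even for a single smooth projective curve of genus $\ge 2$ over $\Q$, while the very special cases that are known (curves of genus $1$, abelian varieties with CM, certain K3 surfaces) all depend on extra arithmetic structure that is absent in the general setting. Any honest attempted proof will run into this arithmetic obstruction, so within the framework of the present paper one should regard Conjecture \ref{conjWO} as a hypothesis to be assumed rather than established.
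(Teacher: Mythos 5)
You have correctly recognized that the paper does not prove Conjecture~\ref{conjWO}: it is stated as an open conjecture, imported from Musta\c{t}\u{a}--Srinivas, and the authors only use it as a hypothesis (and observe in Remark~\ref{remark conjN} that it is essentially equivalent to Conjecture~$\mathrm{N}_n$ for all $n$). Your reformulation in terms of ordinarity in degree $n$, slopes of crystalline Frobenius, and a Chebotarev-type density argument on the image of the compatible system $\rho_\ell$ is an accurate account of why the statement is currently out of reach; the only very minor point worth recording is that one should first reduce to $k=\overline{\Q}$ (by a spreading-out argument as in the proof of Proposition~\ref{conjN2}) before descending $V$ to a number field, and that the known cases you list (abelian varieties of dimension $\le 2$ via Serre and Ogus, certain CM and K3 cases) are precisely what power the partial results (Lemma~\ref{p-adicHodge} and Proposition~\ref{conjN2}) that the paper actually uses. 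There is no gap to point to because there is, correctly, no proof on either side.
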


\begin{prop}[\textup{\cite[Theorem 4.2]{BST}}]
Conjecture \ref{conjDB} holds if and only if Conjecture \ref{conjWO} holds. 
\end{prop}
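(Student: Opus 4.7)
The plan is to establish each implication separately, using the bridge between $F$-injectivity of an isolated singularity in positive characteristic and the Frobenius action on the cohomology of the reduced exceptional fibre of a log resolution.

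For \textbf{Conjecture \ref{conjWO} $\Rightarrow$ Conjecture \ref{conjDB}}, I would begin with a Du Bois singularity $(x \in X)$ over $\C$; after standard reduction arguments one may assume $(x \in X)$ is a normal isolated singularity. Pick a projective log resolution $\pi\colon Y \to X$ with $E = \pi^{-1}(x)_{\mathrm{red}}$ a simple normal crossing divisor. Steenbrink's characterization recalled in Section 1.1 says that Du Bois is equivalent to $(R^i \pi_* \sO_Y)_x \cong H^i(E, \sO_E)$ for all $i \ge 1$. Reducing modulo $p$ and using the long exact sequence linking $H^*_{\m}(\sO_{X_p,x_p})$ with the cohomology of $Y_p$ and of a punctured neighbourhood, together with the Artin--Schreier / \'{e}tale-cohomology dictionary sketched in the introduction, identifies the $F$-injectivity of $\sO_{X_p,x_p}$ with bijectivity of the natural Frobenius on every $H^i(E_p, \sO_{E_p})$, $i \ge 1$. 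The Mayer--Vietoris spectral sequence for $E = \bigcup_j E_j$ computes each $H^i(E, \sO_E)$ out of the groups $H^j(E_I, \sO_{E_I})$ on the smooth intersection strata $E_I$. Applying Conjecture \ref{conjWO} to each $E_I$, combined with an iterated hyperplane-section Lefschetz argument to upgrade top-degree bijectivity to every degree, gives a dense set of primes at which every $H^j(E_{I,p}, \sO_{E_{I,p}})$ is ordinary; the spectral sequence then yields dense $F$-injective type for $(x \in X)$.

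For \textbf{Conjecture \ref{conjDB} $\Rightarrow$ Conjecture \ref{conjWO}}, I would attach to a smooth projective variety $V$ of dimension $n$ the affine cone $X = \Spec R$ with $R = \bigoplus_{m \ge 0} H^0(V, L^m)$, where $L$ is chosen so ample that $R$ is normal and the Kodaira-type vanishings $H^i(V, L^m) = 0$ ($i > 0$, $m \ne 0$) hold; under these hypotheses the vertex $x \in X$ is a normal isolated Du Bois singularity. Blowing up the vertex gives a projective log resolution with exceptional divisor $\cong V$, and the standard calculation produces a graded decomposition $H^{n+1}_{\m}(R) \cong \bigoplus_{m \in \Z} H^n(V, L^m)$ in which the natural Frobenius on $H^n(V_p, \sO_{V_p})$ appears as the degree-$0$ summand of the $F$-action on $H^{n+1}_{\m}(\sO_{X_p,x_p})$. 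Assuming Conjecture \ref{conjDB}, $X$ is of dense $F$-injective type, so this Frobenius is injective on a dense set of primes; as it is a $p$-linear endomorphism of a finite-dimensional vector space over a perfect field, injectivity forces bijectivity, giving precisely Conjecture \ref{conjWO} for $V$.

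\textbf{The main obstacle} is the precise translation between $F$-injectivity of the local ring $\sO_{X_p,x_p}$ and bijectivity of the natural Frobenius on $H^*(E_p, \sO_{E_p})$ when $E$ is a non-smooth simple normal crossing variety: one must carefully track the Frobenius-stable filtration on the abutment of the hypercohomology spectral sequence so that ordinarity on each smooth stratum propagates correctly to $E$. A secondary technical issue in the second direction is certifying that the chosen cone is genuinely Du Bois for every smooth projective $V$, which requires enough positivity of $L$ for the relevant Kodaira--Akizuki--Nakano-type vanishings to hold.
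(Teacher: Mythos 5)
The paper does not contain a proof of this proposition: it is a straight citation of Bhatt--Schwede--Takagi \cite[Theorem 4.2]{BST}, so your attempt can only be measured against that reference and on its own merits. Your cone construction for Conjecture~\ref{conjDB} $\Rightarrow$ Conjecture~\ref{conjWO} is essentially the argument used in \cite{BST} and is sound in outline: after replacing $L$ by a sufficiently high power one has $H^i(V,L^m)=0$ for $i\ge 1$, $m>0$, which by Steenbrink's criterion makes the cone vertex a normal isolated Du Bois singularity, and the degree-zero graded piece of $H^{n+1}_{\m}(R_p)\cong\bigoplus_{m\in\Z}H^n(V_p,L_p^m)$ carries exactly the natural Frobenius on $H^n(V_p,\sO_{V_p})$; dense $F$-injectivity forces injectivity, hence bijectivity by finite-dimensionality over a perfect field. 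Your Lefschetz-hyperplane observation (that Conjecture~\ref{conjWO} for the top cohomology of smooth projective varieties propagates to all $H^i(\,\cdot\,,\sO)$, and then to simple normal crossing varieties via a Frobenius-compatible Mayer--Vietoris spectral sequence) is also correct and is indeed the content of \cite[Lemma 4.5]{BST}, mentioned in Remark~\ref{remark conjN} of this paper.

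The genuine gap is at the very first step of Conjecture~\ref{conjWO} $\Rightarrow$ Conjecture~\ref{conjDB}: you reduce ``after standard reduction arguments'' to the case where $(x\in X)$ is a \emph{normal isolated} singularity, but Conjecture~\ref{conjDB} concerns arbitrary reduced Du Bois singularities, and no such reduction exists. The non-isolated (and non-normal) case is precisely where the difficulty lies: Steenbrink's characterization via $(R^i\pi_*\sO_Y)_x\cong H^i(E,\sO_E)$ is special to normal isolated singularities, so the strategy of translating $F$-injectivity of $\sO_{X_p,x_p}$ into a statement about Frobenius on the cohomology of a compact exceptional fibre collapses. In \cite{BST} the general case is handled through the Du Bois complex $\underline{\Omega}^0_X$ rather than through an exceptional divisor. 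A secondary gap, even within the isolated case, is the asserted equivalence between $F$-injectivity of $\sO_{X_p,x_p}$ and bijectivity of Frobenius on all $H^i(E_p,\sO_{E_p})$: the Grauert--Riemenschneider-plus-Du~Bois identification $H^{i+1}_{\m}(R_p)\cong H^i(E_p,\sO_{E_p})$ only holds in the range $1\le i\le n-2$, and $F$-injectivity on the top local cohomology $H^n_{\m}(R_p)$, which is infinite-dimensional, is a separate condition that your outline does not address. Note also that the Artin--Schreier dictionary as used in this paper (Theorem~\ref{vanishing of H^n-1}) is calibrated for $F$-nilpotence versus nilpotence of Frobenius, not $F$-injectivity versus bijectivity; the $F$-injectivity version is not a formal corollary and would need its own justification.
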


\subsection{$p$-linear maps on vector spaces}
In this subsection, we recall some basic facts on $p$-linear maps on vector spaces. 
The reader is referred to \cite[Lemma 3.3]{CL} for the proofs. 

Let $k$ be a perfect field of characteristic $p>0$ and $V$ be a finite-dimensional vector space over $k$. 
Let $\varphi: V \to V$ be a $p$-linear map, that is, a morphism of abelian groups such that $\varphi(cv)=c^p \varphi(v)$ for all $c \in k$ and $v \in V$. 
Then $V$ can be uniquely decomposed into a direct sum of two subspaces $V=V_{\rm ss} \oplus V_{\rm nil}$, where $\varphi$ is bijective on $V_{\rm ss}$ and is nilpotent on $V_{\rm nil}$. 
The subspace $V_{\rm ss}$ is called the \textit{semi-simple part} and $V_{\rm nil}$ is called the \textit{nilpotent part} of $V$ with respect to $\varphi$. 

Let $\overline{k}$ be an algebraic closure of $k$ and put $\overline{V}:=V \otimes_k \overline{k}$. 
The map $\varphi$ induces a $p$-linear map  $\overline{\varphi}:\overline{V} \to \overline{V}$ defined by $\overline{\varphi}(v \otimes \lambda)=\varphi(v) \otimes \lambda^p$. 
We consider a morphism of abelian groups $\mathrm{id}-\overline{\varphi}: \overline{V} \to \overline{V}$, where $\mathrm{id}$ is the identity morphism on $\overline{V}$. 
This morphism is surjective and the kernel $\Ker(\mathrm{id}-\overline{\varphi})=\{\overline{v} \in \overline{V} \mid \overline{\varphi}(\overline{v})=\overline{v} \}$ is an $\F_p$-vector subspace of $\overline{V}$ such that 
\[
V_{\rm ss} \otimes_k \overline{k} =\Ker(\mathrm{id}-\overline{\varphi}) \otimes_{\F_p} \overline{k}.
\]
Thus, $V_{ss}=0$ if and only if $\mathrm{id}-\overline{\varphi}$ is injective (equivalently, bijective). 


\section{$F$-nilpotence}
In this section, we study ring-theoretic properties of $F$-nilpotent rings, a class of $F$-singularities defined by the nilpotence of the Frobenius actions on the local cohomology modules.\footnote{
Blickle and Bondu introduced in \cite{BB} the notion of rings close to $F$-rational in order to study Lyubeznik numbers in terms of \'{e}tale cohomology.  
In this paper, we use the term ``$F$-nilpotent rings" for the same notion to emphasize the nilpotence of the Frobenius actions on the local cohomology modules.}
\begin{defn}[\textup{cf.~\cite[Definition 4.1]{BB}}]\label{F-nil def}
Let $(R, \m)$ be a $d$-dimensional local ring of characteristic $p>0$. 
We say that $R$ is \textit{$F$-nilpotent} if the natural Frobenius actions $F$ on $H^0_{\m}(R), \dots, H^{d-1}_{\m}(R)$, $0^*_{H^d_{\m}(R)}$ are all nilpotent. 
A singularity $(x \in X)$ over a field of characteristic $p>0$ is said to be $F$-nilpotent if $\sO_{X, x}$ is $F$-nilpotent. 
\end{defn}

\begin{rem}
By \cite[Proposition 4.4]{Lyu}, the natural Frobenius action $F$ on $H^i_{\m}(R)$ (resp. $0^*_{H^d_{\m}(R)}$) is nilpotent, that is, $F^e(H^i_{\m}(R))=0$ (resp. $F^e(0^*_{H^d_{\m}(R)})=0$) for some $e \in \N$ if and only if for every element $z \in H^i_{\m}(R)$ (resp. $z \in 0^*_{H^d_{\m}(R)}$), there exists $e_z \in \N$ such that $F^{e_z}(z)=0$. 
\end{rem}

The following is the dual statement to Definition \ref{F-nil def}. 
\begin{lem}\label{characterization}
Let $(R, \m)$ be a $d$-dimensional $F$-finite local ring of characteristic $p>0$ with a normalized dualizing complex $\omega_R^{\sbt}$. 
Then $R$ is $F$-nilpotent if and only if there exists $e \in \N$ such that $h^{-i}F^e_*\omega_R^{\sbt} \to h^{-i} \omega_R^{\sbt}$ is a zero map for all $i \ne d$ and $F^e_*\left(\omega_R/\tau(\omega_R) \right) \to \omega_R/\tau(\omega_R)$ is also a zero map. 
\end{lem}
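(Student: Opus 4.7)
The plan is to apply local/Matlis duality in order to convert the statement about Frobenius on local cohomology into the dual statement about the cohomology of $\omega_R^{\sbt}$. Set $(-)^{\vee} := \Hom_R(-, E_R(R/\m))$. Since $R$ is $F$-finite and hence excellent, local duality supplies canonical isomorphisms $H^i_\m(R)^\vee \cong h^{-i}\omega_R^{\sbt}$ for every $i$ (passing to the $\m$-adic completion of $R$ if necessary, which does not affect either side of the lemma by faithful flatness). Matlis duality commutes with the $F^e_*$-functor, so under these identifications the $R$-linear Frobenius $F^e\colon H^i_\m(R)\to F^e_*H^i_\m(R)$ is transposed to the natural map $h^{-i}F^e_*\omega_R^{\sbt}\to h^{-i}\omega_R^{\sbt}$ appearing in the statement.

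With this dictionary in hand, the first step is to dispose of the non-top indices $i\neq d$. By the remark immediately preceding the lemma, nilpotence of the Frobenius on $H^i_\m(R)$ element-by-element is equivalent to the existence of a single $e$ with $F^e\equiv 0$ on all of $H^i_\m(R)$. Since $H^i_\m(R)$ is Artinian and $h^{-i}\omega_R^{\sbt}$ is finitely generated, Matlis duality is a faithful contravariant functor between these two classes of modules, so this vanishing transfers to the dual side and gives the claimed vanishing of $h^{-i}F^e_*\omega_R^{\sbt}\to h^{-i}\omega_R^{\sbt}$.

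The second step handles the top-degree piece. The key identification is $\omega_R/\tau(\omega_R)\cong (0^*_{H^d_\m(R)})^\vee$: applying Matlis duality to the short exact sequence
\[
0\to 0^*_{H^d_\m(R)}\to H^d_\m(R)\to H^d_\m(R)/0^*_{H^d_\m(R)}\to 0,
\]
and using $H^d_\m(R)^\vee\cong\omega_R$ together with the definition $\tau(\omega_R)=\Ann_{\omega_R}0^*_{H^d_\m(R)}$, identifies $(0^*_{H^d_\m(R)})^\vee$ with $\omega_R/\tau(\omega_R)$. Under this identification, the Frobenius $F^e$ on the Frobenius-stable submodule $0^*_{H^d_\m(R)}$ dualizes to the trace map $F^e_*(\omega_R/\tau(\omega_R))\to\omega_R/\tau(\omega_R)$ induced by $\mathrm{Tr}^e_R$, so the same Matlis-duality argument converts nilpotence into the required zero-map condition. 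Finally, replacing the finitely many $e$'s produced in the two steps by their maximum yields a single uniform $e$ that works in all degrees simultaneously.

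The main obstacle I anticipate is checking the functorial compatibility in the top degree: one must verify that Matlis duality carries the restriction of $F^e$ to the submodule $0^*_{H^d_\m(R)}$ to the map on the quotient $\omega_R/\tau(\omega_R)$ induced by the trace $F^e_*\omega_R\to\omega_R$, and that the canonical pairing between $H^d_\m(R)$ and $\omega_R$ is genuinely Frobenius/trace-compatible. This requires carefully unwinding how Frobenius is transported onto the tight-closure submodule and how the trace restricts modulo $\tau(\omega_R)$. Once this bookkeeping is in place, the argument becomes entirely formal, just an application of the exact, contravariant nature of Matlis duality.
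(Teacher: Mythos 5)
Your proposal is correct and follows essentially the same route as the paper: both transfer the Frobenius-nilpotence statements through local/Matlis duality, using the identification $(0^*_{H^d_\m(R)})^\vee\cong\omega_R/\tau(\omega_R)$ and the compatibility of Frobenius with the trace map, which are exactly the points the paper delegates to cited arguments of Schwede and Hara--Takagi. The bookkeeping you flag at the end is precisely what those references supply, so your plan reconstructs the paper's proof.
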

\begin{proof}
It follows from an argument similar to the proof of \cite[Proposition 4.3]{Sc} that for each $i$, $F^e:H^i_{\m}(R) \to H^i_{\m}(R)$ is a zero map if and only if $h^{-i}F^e_*\omega_R^{\sbt} \to h^{-i} \omega_R^{\sbt}$ is a zero map. 
On the other hand, since $\omega_R/\tau(\omega_R)$ is the Matlis dual of $0^*_{H^d_{\m}(R)}$, it follows from an argument similar to the proof of \cite[Lemma 2.1]{HT} that $F^e: 0^*_{H^d_{\m}(R)} \to 0^*_{H^d_{\m}(R)}$ is a zero map if and only if $F^e_*\left(\omega_R/\tau(\omega_R) \right) \to \omega_R/\tau(\omega_R)$ is a zero map. 
Thus, we obtain the assertion. 
\end{proof}

We collect basic properties of $F$-nilpotent rings. 
\begin{prop}\label{basic}
Let $(R, \m)$ be a $d$-dimensional reduced local ring of characteristic $p>0$. 
\begin{enumerate}
\item $R$ is $F$-rational if and only if $R$ is $F$-nilpotent and $F$-injective. 
\item Suppose that $R=S/I$, where $S$ is an $n$-dimensional $F$-finite regular local ring and $I$ is a prime ideal of $S$. 
Let $D_S$ be the ring of differential operators of $S$. 
Then $R$ is $F$-nilpotent if and only if $H^{n-d}_I(S)$ is a simple $D_S$-module and $H^i_I(S)=0$ for all $i \ne n-d$. 
\item If $R$ is $F$-finite and $F$-nilpotent, then so is $R_P$ for all $P \in \Spec R$. 
\item 
Let $(R, \m) \hookrightarrow (S, \n)$ be a flat local homomorphism of $F$-finite reduced local rings of characteristic $p>0$. 
Suppose that $S/\m S$ is a field which is a separable algebraic extension of $R/\m$. 
Then $R$ is $F$-nilpotent if and only if so is $S$. 
\end{enumerate}
\end{prop}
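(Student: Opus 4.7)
My plan is to handle (1), (3), and (4) by direct unwinding using the characterizations already in hand, and to derive (2) from the Lyubeznik--Blickle theory of unit $F$-modules and $D$-modules.

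For (1), I will argue straight from definitions. If $R$ is $F$-rational, then $R$ is Cohen-Macaulay, so $H^i_{\m}(R) = 0$ for $i < d$, and $0^*_{H^d_{\m}(R)} = 0$; both $F$-nilpotence (trivially) and $F$-injectivity follow, the latter because any $z \in H^d_{\m}(R)$ with $F(z) = 0$ lies in $0^*_{H^d_{\m}(R)}$. Conversely, $F$ acting simultaneously injectively and nilpotently on $H^i_{\m}(R)$ for $i < d$ forces these modules to vanish, and applying the same observation to the restriction of $F$ to $0^*_{H^d_{\m}(R)} \subseteq H^d_{\m}(R)$ yields $0^*_{H^d_{\m}(R)} = 0$.

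For (2), I will invoke Lyubeznik's theory \cite{Lyu}, which equips each $H^i_I(S)$ with the structure of a finitely generated unit $F$-module over the $F$-finite regular ring $S$, together with the Emerton--Kisin--Blickle correspondence between Frobenius-stable and $D_S$-submodules of such modules. Matlis duality on $\widehat S$ intertwines the Frobenius action on $H^i_{\m}(R)$ with that on $\mathrm{Ext}^{n-i}_{\widehat S}(\widehat R, \widehat S)$, which is related via a direct limit to $H^{n-i}_I(S)$. The plan is to translate: $F$-nilpotence on $H^i_{\m}(R)$ for $i < d$ should correspond to the vanishing $H^i_I(S) = 0$ for $i \ne n - d$, while $F$-nilpotence on $0^*_{H^d_{\m}(R)}$ (equivalently, on $\omega_R / \tau(\omega_R)$ by Matlis duality) should correspond to $D_S$-simplicity of $H^{n-d}_I(S)$, since Frobenius-stable quotients with nilpotent $F$ are dual to unit $F$-submodules, hence to $D_S$-submodules. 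This is the main obstacle of the proposition: the essential input is contained in \cite{BB}, but tracking the Frobenius structure through the several layers of Matlis duality and verifying that the exact condition is ``simple $D_S$-module'' (rather than some variant such as coinciding with the distinguished minimal unit $F$-submodule) will require some care.

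For (3) and (4), I will reduce to Lemma \ref{characterization}, which characterizes $F$-nilpotence by the vanishing for some $e \ge 1$ of the maps $h^{-i} F^e_* \omega_R^{\sbt} \to h^{-i} \omega_R^{\sbt}$ for $i \ne d$ and of the induced map $F^e_*(\omega_R / \tau(\omega_R)) \to \omega_R / \tau(\omega_R)$. For (3), I will localize at $P \in \Spec R$: the functors $F^e_*(-)$, $h^{-i}(-)$, and the formation of $\tau(\omega_{(-)})$ all commute with localization (the last by Lemma \ref{parameter test submodule}(1)), and $(\omega_R^{\sbt})_P$ differs from a normalized dualizing complex of $R_P$ only by a harmless degree shift that does not affect the vanishing of the non-top cohomology sheaves. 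For (4), I will apply $-\otimes_R S$: the Claim inside the proof of Lemma \ref{parameter test submodule} gives $F^e_* \omega_R^{\sbt} \otimes_R S \simeq F^e_* \omega_S^{\sbt}$, and since $S/\m S$ is a separable algebraic extension of the field $R/\m$ the relative dualizing complex of $R \to S$ is trivial, whence $\omega_R^{\sbt} \otimes_R S \simeq \omega_S^{\sbt}$; combining with Lemma \ref{parameter test submodule}(2) gives descent of the characterization from $R$ to $S$, and the converse direction in (4) follows from faithful flatness of $R \to S$ (any nonvanishing map of $R$-modules remains nonzero after base change).
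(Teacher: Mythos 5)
For parts (1), (3), and (4) your argument follows the same route as the paper. In (1) you argue directly from the definitions; the paper does the same (it records the implication ``$F$-rational $\Rightarrow$ $F$-injective'' as a remark before the proposition, while you justify it concretely via $\ker F \subseteq 0^{F}_{H^d_{\m}(R)} \subseteq 0^*_{H^d_{\m}(R)}$, which is fine). In (3) and (4) you reduce, as the paper does, to Lemma~\ref{characterization} and then apply localization at $P$ (using Lemma~\ref{parameter test submodule}(1) and the standard behaviour of dualizing complexes under localization, degree shift included) and base change along $R \hookrightarrow S$ (using the Claim inside Lemma~\ref{parameter test submodule}(2), the triviality of the relative dualizing complex since the closed fibre is a field, and Lemma~\ref{parameter test submodule}(2) itself); your appeal to faithful flatness for the converse direction in (4) is the right justification and is implicit in the paper's ``if and only if''.

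Part (2) is where you diverge from the paper and where your write-up is not yet a proof. The paper simply cites \cite[Proposition 4.2]{BB}, which is verbatim this statement. Your plan --- Lyubeznik's unit $F$-module structure on $H^{n-i}_I(S)$, the correspondence between $F$-stable and $D_S$-submodules for finitely generated unit $F$-modules over an $F$-finite regular ring, and Matlis duality to transport Frobenius data between $H^i_{\m}(R)$ and $H^{n-i}_I(S)$ --- is indeed the technology underlying \cite{BB}, so you are pointed in the right direction. But you explicitly flag the crucial translation as unfinished (``will require some care''), and you do not verify the key point: that nilpotence of $F$ on $0^*_{H^d_{\m}(R)}$ (equivalently, on $\omega_R/\tau(\omega_R)$) matches up, across Matlis duality and the $F$-module/$D$-module dictionary, with the absence of any proper nonzero $D_S$-submodule of $H^{n-d}_I(S)$, as opposed to some weaker condition such as agreement with a canonically defined minimal unit $F$-submodule. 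Without that verification the argument has a genuine gap. The cleanest fix is to do exactly what the paper does and cite \cite[Proposition 4.2]{BB}; alternatively, carry the duality computation through carefully, but as written (2) is an outline, not a proof.
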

\begin{proof}
(1) 
Assume that $R$ is $F$-rational of dimension $d$. Then $H^0_{\m}(R), \dots, H^{d-1}_{\m}(R)$, $0^*_{H^d_{\m}(R)}$ are all zero. Hence, $R$ is obviously $F$-nilpotent. 

We will show the converse implication. Assume that $R$ is $F$-nilpotent and $F$-injective. 
Then the natural Frobenius actions $F$ on $H^0_{\m}(R), \dots, H^{d-1}_{\m}(R)$, $0^*_{H^d_{\m}(R)}$ are all injective and nilpotent. This means that $H^0_{\m}(R), \dots, H^{d-1}_{\m}(R)$, $0^*_{H^d_{\m}(R)}$ all have to be zero, that is, $R$ is $F$-rational. 

(2)
This is the statement of \cite[Proposition 4.2]{BB}. 

(3) We use the characterization of $F$-nilpotent rings given in Lemma \ref{characterization}. 
The assertion then follows from the fact that the localization of a dualizing complex (resp. a canonical module) of $R$ at $P$ is a dualizing complex (resp. a canonical module) of $R_P$, together with Lemma \ref{parameter  test submodule} (1). 

(4) 
Since $R \hookrightarrow S$ is a flat local homomorphism and its fiber ring is a field, $\omega_S^{\sbt}:=\omega_R^{\sbt} \otimes_R S$ is a normalized dualizing complex of $S$. 
By an argument analogous to the claim in Lemma \ref{parameter test submodule} (2), one has $F^e_*\omega_R^{\sbt}  \otimes_R S \cong F^e_*\omega_S^{\sbt}$. 
On the other hand, it follows from Lemma \ref{parameter test submodule} (2) that $(\omega_R/\tau(\omega_R)) \otimes_R S \cong \omega_S/\tau(\omega_S)$ and $F^e_*(\omega_R/\tau(\omega_R)) \otimes_R S \cong F^e_*(\omega_S/\tau(\omega_S))$. 
The assertion then follows from Lemma \ref{characterization}. 
\end{proof}

\begin{prop}\label{fixed point free}
Let $(R, \m)$ be a reduced local ring containing its algebraically closed residue field $k$ of characteristic $p>0$. 
Suppose that $R$ is equi-dimensional of dimension $d \ge 1$ and the punctured spectrum $\Spec R \setminus \{\m\}$ is $F$-rational. 
Then $R$ is $F$-nilpotent if and only if $F(\xi) \ne \xi$ for all nonzero $\xi \in H^i_{\m}(R)$ and all $i$.  
\end{prop}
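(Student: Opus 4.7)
The plan is to reduce the statement to the $p$-linear decomposition on finite-dimensional $k$-vector spaces recalled in Section 1.3. Two preliminary inputs are needed: finite-dimensionality over $k$ of the relevant cohomology modules, and the observation that Frobenius fixed points in the top local cohomology $H^d_\m(R)$ are automatically confined to $0^*_{H^d_\m(R)}$.

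For the finite-dimensionality: since $R_P$ is $F$-rational (hence Cohen-Macaulay) for every $P \ne \m$ and $R$ is equi-dimensional, $R$ is a generalized Cohen-Macaulay local ring, so $H^i_\m(R)$ has finite length for every $0 \le i < d$. On the dual side, after passing to the $\m$-adic completion so that $R$ becomes $F$-finite and admits a dualizing complex (this passage preserves the hypotheses as well as the Frobenius actions on $H^i_\m(R)$ and on $0^*_{H^d_\m(R)}$), Lemma \ref{parameter test submodule}(1) yields $\tau(\omega_R)_P = \tau(\omega_{R_P}) = \omega_{R_P}$ for every $P \ne \m$; hence $\omega_R/\tau(\omega_R)$ is supported at $\m$ and of finite length, and by Matlis duality so is $0^*_{H^d_\m(R)}$. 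Since $k$ is algebraically closed, the material of Section 1.3 applies to the $p$-linear map $F$ on each of the finite-dimensional $k$-vector spaces $H^i_\m(R)$ ($i<d$) and $0^*_{H^d_\m(R)}$: each such action is nilpotent if and only if it admits no nonzero fixed vector.

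To bridge from $0^*_{H^d_\m(R)}$ to the full $H^d_\m(R)$, I claim that every Frobenius-fixed $\xi \in H^d_\m(R)$ already lies in $0^*_{H^d_\m(R)}$. Indeed, $\xi$ is $\m$-power torsion, so $\m^n\xi = 0$ for some $n \ge 1$; since $d \ge 1$, $\m$ is contained in no minimal prime, so prime avoidance produces $c \in \m^n \cap R^\circ$, and then $cF^e(\xi) = c\xi = 0$ for every $e \ge 0$, placing $\xi$ in $0^*_{H^d_\m(R)}$ by definition. Assembling the pieces: if $R$ is $F$-nilpotent, any nonzero fixed vector $\xi \in H^i_\m(R)$ would lie either in some $H^i_\m(R)$ with $i<d$ or, via the claim, in $0^*_{H^d_\m(R)}$, and nilpotence of $F$ there would force $\xi = F^n(\xi) = 0$; conversely, the absence of nonzero fixed vectors throughout forces the nilpotence of $F$ on the finite-dimensional modules in question via the $p$-linear dichotomy, yielding $F$-nilpotence. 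The principal obstacle is the generic infinite-dimensionality of $H^d_\m(R)$ itself, which blocks a direct application of the $p$-linear dichotomy at the top; the tight-closure detour through $0^*_{H^d_\m(R)}$, together with the hypothesis $d \ge 1$ used in prime avoidance, is what rescues the equivalence.
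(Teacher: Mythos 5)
Your proof is correct, and it differs from the paper's in one place worth noting: the ``if'' direction. You apply the $p$-linear dichotomy of Section 1.3 directly to $H^i_\m(R)$ ($i<d$) and $0^*_{H^d_\m(R)}$, correctly observing that finite $R$-length together with $k = R/\m \subset R$ makes each of these a finite-dimensional $k$-vector space on which $F$ is $p$-linear over $k$, so that absence of nonzero fixed vectors forces nilpotence. The paper instead passes to the quotients $H^i_\m(R)/0^F_{H^i_\m(R)}$ and $0^*_{H^d_\m(R)}/0^F_{H^d_\m(R)}$, checks that these are killed by $\m$, shows $\ker(\mathrm{id}-\overline F)=0$ there via an auxiliary computation with $F^e(\xi_i - F(\xi_i))$, and only then invokes the dichotomy and pushes nilpotence back up along the quotient. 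Your route cuts out the $0^F$ quotient and the accompanying bookkeeping, which is a genuine simplification (the quotient buys nothing here since the ambient modules are already finite-dimensional $k$-spaces). The remaining ingredients coincide: the ``only if'' direction and the prime-avoidance step confining Frobenius-fixed $\xi\in H^d_\m(R)$ to $0^*_{H^d_\m(R)}$ are exactly the paper's, and your derivation of finite length of $0^*_{H^d_\m(R)}$ via completion, Lemma \ref{parameter test submodule}(1), and Matlis duality fills in detail the paper simply asserts; it is valid since all rings here are excellent, so passing to $\widehat R$ is compatible with local cohomology and with tight closure of zero in $H^d_\m$.
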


\begin{proof}
First we will show the ``only if" part. 
Assume to the contrary that there exist an integer $1 \le i \le d$ and a nonzero element  $\xi \in H^i_{\m}(R)$ such that $F(\xi)=\xi$. 
If $i=d$, then take $c \in R^{\circ}$ such that $c\xi=0$. 
Since $cF^e(\xi)=c \xi=0$ for all $e \in \N$, we see that $\xi \in 0^*_{H^d_{\m}(R)}$. 
Then $F$ is not nilpotent on $0^{*}_{H^d_{\m}(R)}$, because $F^e(\xi)=\xi \ne 0$ for all $e \in \N$. 
Similarly, if $i \le d-1$, then $F$ is not nilpotent on $H^i_{\m}(R)$. 
This is a contradiction. 

Next we will prove the ``if" part. 
Since $\Spec R \setminus \{\m\}$ is $F$-rational, $H^i_{\m}(R)$ and $0^{*}_{H^d_{\m}(R)}$ have finite length as $R$-modules for all $1 \le i \le d-1$. 
We will show that the $H^i_{\m}(R)/0^F_{H^i_{\m}(R)}$ and $0^{*}_{H^d_{\m}(R)}/0^F_{H^d_{\m}(R)}$ are all finite-dimensional $k$-vector spaces. 
We fix an arbitrary element  $\xi \in 0^{*}_{H^d_{\m}(R)}$. 
Take a sufficiently large $n$ so that $\m^n 0^{*}_{H^d_{\m}(R)}=0$, and pick an integer $e$ such that $p^e \ge n$. 
Then $F^e(\m \xi)=\m^{[p^e]}F^e(\xi) \subset \m^{n}F^e(\xi)=0$, so $\m \xi \in 0^F_{H^d_{\m}(R)}$. 
Therefore, $\m \left(0^{*}_{H^d_{\m}(R)}/0^F_{H^d_{\m}(R)}\right)=0$. 
Similarly, we can check that $\m \left({H^i_{\m}(R)}/0^{F}_{H^i_{\m}(R)}\right)=0$ for all $1 \le i \le d-1$. 

Let $V_i:={H^i_{\m}(R)}/0^{F}_{H^i_{\m}(R)}$ for each $i=1, \dots, d-1$ and $V_d:=0^{*}_{H^d_{\m}(R)}/0^F_{H^d_{\m}(R)}$. 
The natural Frobenius action $F$ on $H^i_{\m}(R)$ induces a Frobenius action $\overline{F}$ on $V_i$ for all $i=1, \dots, d$. 
We will prove that $\overline{F}$ is nilpotent on each $V_i$. 
Let $\xi_i \in H^i_{\m}(R)$ whose image $\overline{\xi}_i$ in $V_i$ lies in $\ker \,(\mathrm{id}-\overline{F}: V_i \to V_i)$. 
By definition, $\xi_i-F(\xi_i) \in 0^F_{H^i_{\m}(R)}$, which means that  $F^e(\xi_i)-F^{e+1}(\xi_i)=F^e(\xi_i-F(\xi_i))=0$ for some $e \in \N$. 
It then follows by assumption that $F^e(\xi_i)=0$, so that $\overline{\xi}_i=0$. 
Therefore, $\Ker(\mathrm{id}-\overline{F})=0$, which implies that the semi-simple part of $V_i$ with respect to $\overline{F}$ is trivial, that is, $\overline{F}$ is nilpotent on $V_i$. 

By the definition of $V_i$, the nilpotence of $\overline{F}$ on the $V_i$ is equivalent to the nilpotence of $F$ on $H^0_{\m}(R), \dots, H^{d-1}_{\m}(R)$, $0^*_{H^d_{\m}(R)}$.  Thus, $R$ is $F$-nilpotent. 
\end{proof}

\begin{rem}
We say that a local ring $(R,\m)$ of prime characteristic $p$ is \textit{$F$-fixed point free} if  $F(\xi) \ne \xi$ for all nonzero $\xi \in H^i_{\m}(R)$ and all $i$.  
The notion of $F$-fixed point freeness is closely related to the notion of $F$-instability introduced by Fedder and Watanabe \cite{FW}. For example, suppose that $(R, \m)$ is an $F$-injective complete local ring with algebraically closed residue field $R/\m$. 
Comparing \cite[Corollary 2.8]{En} and \cite[Proposition 5.2]{HS}, we see that $R$ is $F$-unstable if and only if $R$ is $F$-fixed point free. 
\end{rem}

\begin{eg}\label{example charp}
Let $k$ be a perfect field of characteristic $p>0$. 
\begin{enumerate}
\item Let $R=\bigoplus_{n \ge 0}R_n$ be a $d$-dimensional Cohen--Macaulay graded ring with $R_0=k$, and let $\m$ be the unique maximal homogeneous ideal of $R$. 
Suppose that $R$ is $F$-rational away from $\m$, that is, $0^{*}_{H^d_{\m}(R)}$ has finite length as an  $R$-module. 
Since the action of Frobenius on $H^d_{\m}(R)$ multiplies degrees by $p$, we can see that $R_{\m}$ is $F$-nilpotent if and only if the Frobenius action on the part $[H^d_{\m}(R)]_0$ of degree zero is nilpotent. 
In particular, if the $a$-invariant $a(R)$ is negative, then $R_{\m}$ is $F$-nilpotent. 

For example, let $R=k[x,y,z]/(x^2+y^3+z^7)$ and $\m=(x, y, z)$. 
Since the degree zero piece of $H^2_{\m}(R)$ is zero, $R_{\m}$ is $F$-nilpotent. 

\item Let $(R, \m)=\left(k[x,y,z]/(x^2+y^3+z^7+xyz)\right)_{(x, y, z)}$. 
Then $R$ is $F$-injective but not $F$-nilpotent. 
On the other hand, $R/zR \cong k[t^2, t^3]_{(t^2, t^3)}$ is $F$-nilpotent. Thus, $F$-nilpotence does not deform.  

\item (\textup{\cite[Example 5.28]{Bl}})
Let $R=\left(k[x,y,z]/(x^4+y^4+z^4)\right)_{(x,y,z)}$. 
Then $R$ is $F$-nilpotent if and only if $p \not\equiv 1 \; \mathrm{mod} \; 4$. 

\item Suppose that $p=2$ and let $R=k[[s^4, s^3t, st^3, t^4]]$. Then $R$ is $F$-nilpotent but not normal or Cohen-Macaulay. 

\end{enumerate}
\end{eg}


\section{Main result} 
In this section, we propose a conjecture closely related to the weak ordinarity conjecture (Conjecture \ref{conjWO}) and investigate a geometric interpretation of  $F$-nilpotent rings assuming this conjecture.  

An equi-dimensional separated reduced scheme $X$ of finite type over an algebraically closed field $k$ is said to be \textit{a simple normal crossing variety} over $k$ of dimension $n$ if each irreducible component of $X$ is smooth and if for every closed point $x \in X$, there exists an isomorphism 
\[
\widehat{\sO}_{X, x} \cong k[[T_0, \dots, T_n]]/(T_0T_1 \cdots T_{r_x})
\]
of $k$-algebras for some $0 \le r_x \le n$, where $\widehat{\sO}_{X, x}$ is the $\m_{X,x}$-adic completion of $\sO_{X, x}$. 

\begin{conjC}
Let $V$ be an $n$-dimensional projective simple normal crossing variety over an algebraically closed field $k$ of characteristic zero. 
Given a model of $V$ over a finitely generated $\Z$-subalgebra $A$ of $k$, there exists a dense subset of closed points $S \subseteq \Spec A$ such that for every $i$, unless $H^i(V, \sO_V)=0$, the natural Frobenius action on $H^i(V_{\mu}, \sO_{V_{\mu}})$ is non-nilpotent for all $\mu \in S$. 
\end{conjC}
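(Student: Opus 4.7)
The plan is to reduce Conjecture $\mathrm{N}_n$ for a projective SNC variety $V$ to an analogous statement for the smooth projective varieties that appear as the strata of $V$. Write $V = V_1 \cup \cdots \cup V_r$ with each $V_i$ smooth of dimension $n$, and for $\emptyset \neq I \subseteq \{1,\dots,r\}$ let $V_I = \bigcap_{i \in I} V_i$, which is smooth projective of dimension $n - |I| + 1$. The usual Mayer--Vietoris/\v{C}ech resolution $0 \to \sO_V \to \bigoplus_i \sO_{V_i} \to \bigoplus_{i<j} \sO_{V_i \cap V_j} \to \cdots$ yields a first-quadrant spectral sequence
\[
E_1^{p,q} = \bigoplus_{|I|=p+1} H^q(V_I,\sO_{V_I}) \Longrightarrow H^{p+q}(V,\sO_V),
\]
which is Frobenius-equivariant after reduction mod $p$, and which degenerates at $E_2$ in characteristic zero by Deligne's mixed Hodge theory for SNC varieties.

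First I would spread $V$, its components $V_i$, and all the strata $V_I$ out to a flat model over $A$, so that after shrinking $\Spec A$ each $V_{I,\mu}$ is smooth projective and the spectral sequence above arises in positive characteristic in a uniform way, compatibly with the Frobenius action. Next, if $H^i(V,\sO_V)$ is nonzero, degeneration at $E_2$ gives some nonzero $E_\infty^{p,i-p} = E_2^{p,i-p}$, which is a nonzero subquotient of the \v{C}ech-type vector space $\bigoplus_{|I|=p+1} H^{i-p}(V_I,\sO_{V_I})$; in particular some $H^{i-p}(V_I,\sO_{V_I})$ is nonzero, and by flat base change the same holds for $V_{I,\mu}$ for $\mu$ in a dense open subset of $\Spec A$.

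Then I would apply the smooth case of the non-nilpotence statement to each of the finitely many strata $V_I$ and cohomological degrees $q$ involved: for a dense subset of closed points $\mu$, the Frobenius action on each nonzero $H^q(V_{I,\mu},\sO_{V_{I,\mu}})$ is non-nilpotent. In dimensions $\leq 2$ this is exactly what is furnished by the work of Serre on smooth projective curves and Ogus on smooth projective surfaces, giving Conjectures $\mathrm{N}_1$ and $\mathrm{N}_2$ for free. To push non-nilpotence up through the spectral sequence, the cleanest device is to observe that \emph{bijectivity} of a $p$-linear map on a finite-dimensional vector space is preserved under passage to Frobenius-stable subobjects and quotients; if the smooth input is strengthened to bijectivity (ordinariness), then bijectivity propagates from the $E_1$-terms to the $E_\infty$-page and hence to the associated graded pieces of a Frobenius-stable filtration on $H^i(V_\mu,\sO_{V_\mu})$, yielding non-nilpotence there.

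The main obstacle is the smooth input in dimension $\geq 3$: already the top-degree version is the weak ordinarity conjecture (Conjecture~\ref{conjWO}), which is open, and we would actually need its analogue in every cohomological degree simultaneously. A secondary subtlety is the propagation step itself, since non-nilpotence alone is not preserved under subquotients of $p$-linear maps; either one must upgrade the smooth input to bijectivity, as above, or give a more delicate argument using the specific shape of the \v{C}ech differentials and a weight-style decomposition. Thus the natural route to a full proof of $\mathrm{N}_n$ passes through a strengthened form of Conjecture~\ref{conjWO}, while $\mathrm{N}_1$ and $\mathrm{N}_2$ can be deduced directly from Serre's and Ogus's results by the reduction above.
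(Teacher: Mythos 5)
Your setup — the Mayer--Vietoris spectral sequence $E_1^{p,q}=\bigoplus_{|I|=p+1}H^q(V_I,\sO_{V_I})\Rightarrow H^{p+q}(V,\sO_V)$, its $E_2$-degeneration in characteristic zero, spreading out over $A$, and the identification of the weak ordinarity conjecture as the obstacle for $n\ge 3$ — all match what the paper does in Proposition~\ref{conjN2}. You also correctly flag the real difficulty: non-nilpotence of a $p$-linear map is not inherited by subquotients, so one cannot simply transport Serre's and Ogus's results from the $E_1$-page to the $E_2$-page.

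However, the repair you propose does not work, and as a result the final sentence of your proposal (``$\mathrm{N}_1$ and $\mathrm{N}_2$ can be deduced directly from Serre's and Ogus's results by the reduction above'') is not actually established by the argument you give. You suggest strengthening the smooth input to \emph{bijectivity} of Frobenius (ordinarity) so that it passes to subquotients. But the smooth inputs that are actually available, and that the paper uses in Lemma~\ref{p-adicHodge}, only deliver \emph{non-nilpotence}: Serre's result gives $p$-rank at least one for abelian varieties on a density-one set, and the Ogus-style argument gives non-nilpotence on $H^2$ of a smooth surface on a positive-density set. Ordinarity of Jacobians on a density-one set is itself a form of the weak ordinarity conjecture and is open beyond elliptic curves, so you cannot appeal to it even for $n\le 2$. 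Your plan therefore stalls exactly at the propagation step you identified.

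The paper circumvents this with two ideas absent from your proposal. First, it exploits the shape of the filtration: the spectral sequence gives a Frobenius-stable filtration on $H^i(V_\mu,\sO_{V_\mu})$ whose successive quotients are the $E_2^{p,i-p}$, and a \emph{quotient} of a nilpotent $p$-linear map is nilpotent; so one argues by contradiction, peeling off $E_2^{0,i}$, then $E_2^{1,i-1}$, etc. Second, and crucially, for the rows $q=1$ the paper does not try to propagate from the $E_1$-terms at all: it observes that $E_2^{0,1}$ (a kernel) and $E_2^{1,1}$ (a cokernel) can be realized \emph{directly} as $H^1(Y,\sO_Y)$ of a single positive-dimensional abelian variety $Y$, by passing to Albanese varieties, so Lemma~\ref{p-adicHodge}~(1) applies verbatim to $Y$; while for the row $q=0$ the $E_1$-terms consist of $H^0$ of connected projective varieties, on which Frobenius is bijective, and bijectivity \emph{does} pass to subquotients, so $E_2^{p,0}$ inherits it automatically. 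This, together with the fact that $E_1^{p,2}=0$ for $p\ge 1$ when $n=2$ (so $E_2^{0,2}$ is literally $\bigoplus_i H^2(V_i,\sO_{V_i})$ rather than a proper subquotient), is what makes the case $n\le 2$ go through without any ordinarity input. Finally, there is a small density bookkeeping point you omit: the sets produced by the $H^1$ and $H^2$ arguments have density one and positive density respectively, and one must enlarge $A$ so that their intersection is still dense.
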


\begin{rem}\label{remark conjN}
(1) It follows from \cite[Lemma 4.5]{BST} that Conjecture \ref{conjWO} implies Conjecture $\mathrm{N}_n$. 
On the other hand, Bhargav Bhatt points out in \cite{Ba} that if Conjecture $\mathrm{N}_n$ holds for all $n$, then Conjecture \ref{conjWO} holds as well. 

(2) Conjecture $\mathrm{N}_{n+1}$ implies Conjecture $\mathrm{N}_n$. 
Indeed, let $V$ be an $n$-dimensional projective simple normal crossing variety over an algebraically closed field $k$ of characteristic zero. 
Suppose that we are given a model of $V$ over a finitely generated $\Z$-subalgebra $A$ of $k$. 
Applying Conjecture $\mathrm{N}_{n+1}$ to the product $V \times \pP^1_k$ of $V$ and the projective line $\pP^1_k$, we see that there exists a dense subset of closed points $S \subseteq \Spec A$ such that for each $i \le n$, provided $H^i(V, \sO_V)=H^i(V \times \pP^1_k, \sO_{V \times \pP^1_k}) \ne 0$, the Frobenius action on $H^i(V_{\mu}, \sO_{V_{\mu}}) =H^i((V \times \pP^1_k)_{\mu}, \sO_{(V \times \pP^1_k)_{\mu}})$ is non-nilpotent for all $\mu \in S$. 
This means that Conjecture $\mathrm{N}_n$ holds. 
\end{rem}

The authors learned the following lemma from Bhargav Bhatt, who they thank. 

\begin{lem}\label{p-adicHodge}
Let $X$ be a smooth projective variety over a number field $K$. 
\begin{enumerate}
\item
Assume that $H^1(X, \sO_X) \ne 0$. 
Then there exist a finite extension $L/K$ of number fields and a set $S$ of primes of density one in $L$ such that the natural Frobenius action on the sheaf cohomology $H^1\left((X \otimes_K L)_{\nu}, \sO_{(X \otimes_K L)_{\nu}}\right)$ of the reduction $(X \otimes_K L)_{\nu}$ of $X \otimes_K L$ is non-nilpotent for all $\nu \in S$. 

\item 
Assume that $H^2(X, \sO_X) \ne 0$.  
Then there exists a set $S$ of primes of positive density in $K$ such that the natural Frobenius action on the sheaf cohomology $H^2(X_{\nu}, \sO_{X_{\nu}})$ of the reduction $X_{\nu}$ of $X$ is non-nilpotent for all $\nu \in S$. 
\end{enumerate}
\end{lem}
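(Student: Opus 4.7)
The plan is to use $p$-adic Hodge theory to translate the hypotheses on coherent cohomology into Hodge--Tate weight statements for $p$-adic Galois representations of $X$, and then combine Faltings' crystalline--\'{e}tale comparison with Chebotarev equidistribution to produce primes of the desired kind. The unifying observation is that, for a prime $\nu$ of good reduction with residue characteristic $p$, the Frobenius on $H^i(X_\nu,\sO_{X_\nu})$ is non-nilpotent precisely when the slope-$0$ part of the crystalline Frobenius on $H^i_{\mathrm{crys}}(X_\nu/W(\kappa(\nu)))$ is nontrivial, i.e.\ when the Newton polygon at $\nu$ actually meets its Hodge lower bound at slope zero.

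For part (1), I would first replace $X$ by its Picard variety $P:=\mathrm{Pic}^0_{X/K}$, a nontrivial abelian variety of dimension $h^1(X,\sO_X)$. For primes $\nu$ of good reduction there is a Frobenius-equivariant identification $H^1((X\otimes_K L)_\nu,\sO)\cong \mathrm{Lie}(P_\nu)$, and non-nilpotence of the Frobenius on the right is equivalent to $P_\nu$ having positive $p$-rank, i.e.\ to some Frobenius eigenvalue on $H^1_{\mathrm{et}}(P_{\bar L},\Q_\ell)$ being a $p$-adic unit. After a finite extension $L/K$ that trivializes the component group of the Mumford--Tate group of $P$ and splits the isogeny decomposition of $P$ over $L$, I would handle each simple isogeny factor separately: CM factors by the Shimura--Taniyama formula (every CM abelian variety has ordinary reduction at a density-one set of primes after a finite extension), and non-CM factors by combining Serre-type results on bigness of the $\ell$-adic Galois image with Chebotarev, so that the supersingular / zero-$p$-rank Newton stratum in the associated Shimura variety is met by the Frobenius only on a density-zero set of primes.

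For part (2), I would work directly with $V_p:=H^2_{\mathrm{et}}(X_{\bar K},\Q_p)$. The Hodge--Tate decomposition
\[
V_p \otimes_{\Q_p} \C_p \;\cong\; \bigl(H^2(X,\sO_X)\otimes\C_p\bigr) \oplus \bigl(H^1(X,\Omega^1_X)\otimes\C_p(-1)\bigr) \oplus \bigl(H^0(X,\Omega^2_X)\otimes\C_p(-2)\bigr)
\]
shows that $V_p$ has a nontrivial Hodge--Tate weight-$0$ summand, of dimension $h^2(X,\sO_X)$. For primes $\nu\mid p$ of good reduction, $V_p\vert_{G_{K_\nu}}$ is crystalline with $D_{\mathrm{crys}}\cong H^2_{\mathrm{crys}}(X_\nu/W)\otimes K_{\nu,0}$, and non-nilpotence of Frobenius on $H^2(X_\nu,\sO_{X_\nu})$ is equivalent to the slope-$0$ part of the crystalline Frobenius being nonzero; equivalently, using $\ell$-independence (Katz--Messing), to the characteristic polynomial $P_\nu(T)\in\Z[T]$ of $\ell$-adic Frobenius having some root of $p$-adic valuation zero, i.e.\ to $P_\nu(T)\bmod p$ not being a power of $T$. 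To produce such $\nu$ with positive density, I would apply Chebotarev to a suitable mod-$p$ Galois representation associated with $V_p$ for a density-one set of $p$, invoking the nontriviality of the Hodge--Tate weight-$0$ summand to certify that the locus of Frobenius elements with all eigenvalues of positive $p$-adic valuation is a proper closed subset of the image of Galois and is therefore met on a strictly-less-than-all set of $\nu\mid p$.

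The main obstacle in both parts is the Hodge--Newton gap: non-vanishing of $H^i(X,\sO_X)$ only guarantees that the Hodge polygon begins with a slope-$0$ segment, whereas non-nilpotence at $\nu$ requires the Newton polygon at that $\nu$ to actually begin with slope $0$, and Mazur's inequality only gives an upper bound in the reverse direction. Bridging this gap for a density-one set of primes (as in part (1)) requires strong input in the form of known density-one ordinary-reduction results for abelian varieties, and is the reason a finite extension $L/K$ is allowed; for positive density (part (2)) the weaker Chebotarev equidistribution argument should suffice, but making it uniform across varying residue characteristics $p$ is the delicate step.
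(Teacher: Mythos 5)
For part~(1) your proposal is essentially correct and close in spirit to the paper's argument: the paper passes to the Albanese variety $\mathrm{Alb}(X)$ (isogenous to the dual of your Picard variety, so this is immaterial) and quotes Serre \cite{Se} and Joshi--Rajan \cite[Theorem~6.6.2]{JR} for the density-one positive $p$-rank statement after a finite extension $L/K$; your sketch of how one might re-derive that result (CM via Shimura--Taniyama, non-CM via big Galois image plus Chebotarev) is not needed once one cites the theorem, but it is not wrong.

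For part~(2), however, your Chebotarev step as stated does not close. You propose to apply Chebotarev to a mod-$p$ representation and to ``invoke the nontriviality of the Hodge--Tate weight-$0$ summand to certify that the locus of Frobenius elements with all eigenvalues of positive $p$-adic valuation is a proper closed subset of the image of Galois.'' That is precisely the content of the lemma, not something Hodge--Tate theory hands you for free: the Hodge--Tate weight-$0$ piece forces the Hodge polygon to start with a slope-$0$ segment, but the Newton polygon can sit strictly above, and nothing you have written rules out that the Galois image lands entirely in the ``all-slopes-positive'' locus at every good prime. You flag exactly this Hodge--Newton gap in your final paragraph, but then assert ``for positive density the weaker Chebotarev equidistribution argument should suffice'' without supplying the mechanism. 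So the key step is missing.

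The paper bridges the gap by a contrapositive argument that is genuinely different from your plan. It fixes an auxiliary prime $\ell > d := \dim H^2(\overline{X}_{\mathrm{\acute et}},\Q_\ell)$, passes to a finite Galois extension $L$ over which the mod-$\ell$ representation on $H^2$ is \emph{trivial} and which contains the $\ell$-th roots of unity, and restricts to primes $\nu$ of good reduction lying over rational primes $p$ that split completely in $L$ (a density-one set). Then: (i) triviality of the mod-$\ell$ representation gives $t_\nu \equiv d \pmod{\ell}$; (ii) complete splitting gives $p \equiv 1 \pmod{\ell}$; (iii) if the Frobenius on $H^2(X_\nu,\sO_{X_\nu})$ were nilpotent for a density-one set of such $\nu$, then by Hartshorne--Speiser and Katz--Messing the integer $t_\nu$ is divisible by $p$; (iv) the Weil bound $|t_\nu|\le dp$ together with (i)--(iii) and $\ell > d$ forces $t_\nu=\pm dp$, hence all Frobenius eigenvalues equal $\pm p$. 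Chebotarev then pins down the trace of the whole representation, so $\rho_\ell^{\mathrm{ss}}\cong\Q_\ell(-1)^{\oplus d}$, and \emph{only then} does Faltings' Hodge--Tate decomposition enter, forcing $H^2(X,\sO_X)=0$ --- a contradiction. Note also that the paper uses a single fixed auxiliary prime $\ell$ for the Galois representation, rather than the family of $p$-adic representations $H^2_{\mathrm{\acute et}}(X_{\overline K},\Q_p)$ that your sketch works with; this sidesteps the ``uniformity across varying $p$'' issue you raise at the end. In short, the right move is not to exhibit a proper subvariety of the Galois image a priori, but to assume density-one nilpotence, deduce a rigid form for the semi-simplified $\ell$-adic representation, and contradict the Hodge--Tate weights.
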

\begin{proof}
(1) 
Let $A:=\mathrm{Alb}(X)$ be the Albanese variety of $X$. 
It follows from \cite{Se} and \cite[Theorem 6.6.2]{JR} 
that there exist a finite extension $L/K$ of number fields and a set $S$ of primes of density one in $L$ such that the $p$-rank of $(A \otimes_K L)_{\nu}$ is at least one for all $\nu \in S$. 
The $p$-rank of $(A \otimes_K L)_{\nu}$ is equal to the dimension of the semi-simple part of 
\[H^1((A \otimes_K L)_{\nu}, \sO_{(A \otimes_K L)_{\nu}})=H^1((X \otimes_K L)_{\nu}, \sO_{(X \otimes_K L)_{\nu}})\] with respect to the Frobenius action. 
Thus, the Frobenius action on $H^1((X \otimes_K L)_{\nu}, \sO_{(X \otimes_K L)_{\nu}})$ is non-nilpotent for all $\nu \in S$. 

(2) Ogus proved in \cite{Og} that every abelian surface has infinitely many primes of ordinary reduction. 
We employ the same argument, but we use $p$-adic Hodge theory instead of the semi-simplicity of crystalline Frobenius. 

Let $\overline{K}$ be the algebraic closure of $K$ and $\overline{X}=X \otimes_K  \overline{K}$. 
Let $G_K=\mathrm{Gal}(\overline{K}/K)$ be the absolute Galois group of $K$.  
Fix a prime $\ell>d:=\dim_{\Q_{\ell}} H^2(\overline{X}_{\textup{\'et}}, \Q_{\ell})$ (we can take such an $\ell$, because $d$ is independent of the choice of $\ell$) and consider the corresponding Galois representation $\rho_{\ell}:G_K \to \mathrm{Aut}_{\Q_{\ell}}\left(H^2(\overline{X}_{\textup{\'et}}, \Q_{\ell})\right)$. 
The action of $G_K$ leaves the free part $V_{\ell}$ of $H^2(\overline{X}_{\textup{\'et}}, \Z_{\ell})$ fixed, so let $\overline{\rho}_{\ell}$ denote the representation of $G_K$ on $V_{\ell} \otimes \Z/\ell \Z$ induced by $\rho_{\ell}$. 
Take a finite Galois extension $L$ of $\Q$, containing $K$ and all the $\ell$-th roots of unity, such that $\overline{\rho}_{\ell}(\sigma)=1$ for all $\sigma$ in the absolute Galois group $G_L$ of $L$. 
Let $\nu$ be a prime of $L$ lying over a rational prime $p \ne \ell$ such that $p$ completely splits in $L$. 
Assume in addition that $\nu$ is a place of good reduction for $X \otimes_K L$, and write $(X \otimes_K L)_{\nu}$ for the reduction of $X \otimes_K L$ at $\nu$. 
Note that the set of such primes $\nu$ has density one in $L$. 
After replacing $K$ by $L$ and $X$ by $X \otimes_K L$, we will show that the natural Frobenius action on $H^2(X_{\nu},  \sO_{X_{\nu}})$ is not nilpotent for a set of primes $\nu$ of positive density in $K$. 

Let $t_{\nu}$ be the trace of the $\ell$-adic Frobenius $F_{\nu}$ acting on $H^2(\overline{X}_{\textup{\'et}}, \Q_{\ell})$, which is a rational integer. 
Since $\overline{\rho}_{\ell}|_{G_L}$ is trivial, we have 
\begin{subequations}
\begin{equation}\label{1st equation}
t_{\nu} \equiv d \ \mathrm{mod} \ \ell. 
\end{equation} 
Also, since $p$ splits completely in $L$ and $L$ contains all the $\ell$-th roots of unity, 
we have 
\begin{equation}\label{2nd equation}
p \equiv 1 \; \mathrm{mod} \; \ell.
\end{equation}
\end{subequations}

Suppose to the contrary that the Frobenius action on $H^2(X_{\nu}, \sO_{X_{\nu}})$ is nilpotent for a set of primes $\nu$ of density one. 
It then follows from \cite[Proposition 5.1]{HS} that 
\[H^2(\left(X_{\nu} \otimes_{k_{\nu}} \overline{k_{\nu}}\right)_{\textup{\'et}}, \Z/p \Z)=0,\]
where $\overline{k_{\nu}}$ is the algebraic closure of the residue field $k_{\nu}$ at the prime $\nu$.   
The \'etale cohomology $H^2(\left(X_{\nu} \otimes_{k_{\nu}} \overline{k_{\nu}}\right)_{\textup{\'et}}, \Z/p \Z)$ corresponds to the part of the crystalline cohomology $H^2_{\textup{cris}}(X_{\nu}/W(k_{\nu})) \otimes \Q_p$ on which the eigenvalues of the crystalline Frobenius are $p$-adic units. 
Therefore, the trace of the crystalline Frobenius on $H^2_{\textup{cris}}(X_{\nu}/W(k_{\nu})) \otimes \Q_p$, which is a rational integer, is divisible by $p$.
On the other hand, by Katz-Messing theorem \cite{KM}, the characteristic polynomials of the crystalline Frobenius on $H^2_{\textup{cris}}(X_{\nu}/W(k_{\nu})) \otimes \Q_p$ and of the $\ell$-adic Frobenius $F_{\nu}$ on $H^2(\overline{X}_{\textup{\'et}}, \Q_{\ell})$ are equal to each other. 
Thus, the trace $t_{\nu}$ of the $\ell$-adic Frobenius $F_{\nu}$ is divisible by $p$. 

By the Weil conjectures, the eigenvalues of $F_{\nu}$ are algebraic integers $a_1, \dots, a_d$ such that $|a_i|=p$ for all archimedean absolute values and for all $i=1, \dots, d$. 
In particular, $|t_{\nu}| \le dp$. 
Since we have seen above that $t_{\nu}$ is a rational integer divisible by $p$, it follows from the equations (\ref{1st equation}) and (\ref{2nd equation}), together with the fact that $\ell >d$, that $t_{\nu}=\pm dp$. 
This implies that the $a_i$ are all equal to $\pm p$. 
By assumption, this holds for the set of primes $\nu$ of density one, so Chebotarev's density theorem tells us that $\mathrm{tr}(\rho_{\ell})=\mathrm{tr}\left(\Q_{\ell}(-1)^{\oplus d} \right)$.
Thus, the semi-simplification $\rho_{\ell}^{\rm ss}$ of $\rho_{\ell}$ is isomorphic to $\Q_{\ell}(-1)^{\oplus d}$, because a semi-simple representation is determined by its trace. 

Now we use the Hodge-Tate decomposition of $p$-adic \'etale cohomology due to Faltings \cite{Fa}. 
Fix a maximal ideal $\lambda$ of $\sO_K$ dividing $\ell$, and let $K_{\lambda}$ be the $\lambda$-adic completion of $K$ and $\overline{K_{\lambda}}$ be its algebraic closure.  
Let also $X_{K_{\lambda}}=X \otimes_K K_{\lambda}$.  
We fix an inclusion $\overline{K} \hookrightarrow \overline{K_{\lambda}}$ and consider $\mathrm{Gal}(\overline{K_{\lambda}}/K_{\lambda})$ as a subgroup of $G_K$. 
Then there exists a $\mathrm{Gal}(\overline{K_{\lambda}}/K_{\lambda})$-equivariant isomorphism 
\[
H^2(\overline{X}_{\textup{\'et}}, \Q_{\ell}) \otimes_{\Q_{\ell}} \C_{\ell} \cong \bigoplus_{i+j=2} H^i(X_{K_{\lambda}}, \Omega_{X_{K_{\lambda}}/K_{\lambda}}^j) \otimes_{K_{\lambda}} \C_{\ell}(-j), 
\]
where $\mathrm{Gal}(\overline{K_{\lambda}}/K_{\lambda})$ acts on the left-hand side diagonally and on $H^i(X_{K_{\lambda}}, \Omega_{X_{K_{\lambda}}/K_{\lambda}}^j)$ trivially.  
Since $\left( \C_{\ell} \right)^{\mathrm{Gal}(\overline{K_{\lambda}}/K_{\lambda})}=K_{\lambda}$ and $\left( \C_{\ell}(j) \right)^{\mathrm{Gal}(\overline{K_{\lambda}}/K_{\lambda})}=0$ for all $j \ne 0$, using the fact that 
the Hodge numbers can be recovered from the semi-simplifications of the $p$-adic Galois representations (see for example \cite[Remark 4.2]{It} or \cite[Proposition 5.1]{Wa}), we see that 
\begin{align*}
\dim_K H^2 (X, \sO_X)=\dim_{K_{\lambda}} (\rho_{\ell}^{\rm ss} \otimes_{\Q_{\ell}} \C_{\ell})^{\mathrm{Gal}(\overline{K_{\lambda}}/K_{\lambda})}
&=\dim_{K_{\lambda}} \left(\C_{\ell}(-1)^{\oplus d} \right)^{\mathrm{Gal}(\overline{K_{\lambda}}/K_{\lambda})}\\
&=0.
\end{align*}
However, this contradicts the assumption that $H^2 (X, \sO_X) \ne 0$. 
\end{proof}

Using Lemma \ref{p-adicHodge}, we can prove Conjectures $N_1$ and $N_2$. 
\begin{prop}\label{conjN2}
Conjecture $\mathrm{N}_n$ holds true if $n \le 2$. 
\end{prop}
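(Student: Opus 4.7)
The case $n=0$ is immediate: $H^0(V_\mu, \sO_{V_\mu})$ is a product of copies of the (perfect) residue field, on which Frobenius $x \mapsto x^p$ is bijective. Assume henceforth $n \in \{1,2\}$. The plan is to descend from $V$ to its smooth strata. Let $V^{[p]}$ denote the disjoint union of $(p{+}1)$-fold intersections of irreducible components of $V$; it is smooth projective of dimension $n-p$. The \v{C}ech-type resolution
\[
0 \to \sO_V \to \sO_{V^{[0]}} \to \sO_{V^{[1]}} \to \cdots \to \sO_{V^{[n]}} \to 0
\]
yields a spectral sequence $E_1^{p,q} = H^q(V^{[p]}, \sO_{V^{[p]}}) \Rightarrow H^{p+q}(V, \sO_V)$ that degenerates at $E_2$ in characteristic zero by Deligne's mixed Hodge theory. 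After shrinking the base $A$, I may assume the $E_r^{p,q}$ are free $A$-modules of constant rank, all differentials have constant rank, and the spectral sequence is compatible with reduction to $\kappa(\mu)$ for every closed point $\mu \in \Spec A$, so that $E_\infty^{p,q}(V_\mu) = E_2^{p,q}(V) \otimes_A \kappa(\mu)$.

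\textbf{Reduction to graded pieces; easy cases.} The filtration on $H^i(V_\mu, \sO_{V_\mu})$ with graded pieces $E_\infty^{p,i-p}(V_\mu)$ is Frobenius-equivariant, so non-nilpotence of Frobenius on some graded piece implies non-nilpotence on $H^i(V_\mu, \sO_{V_\mu})$. Thus, given $H^i(V, \sO_V) \ne 0$, I need only single out a nonzero graded piece and exhibit a dense set of primes on which Frobenius acts non-nilpotently. The \emph{combinatorial} pieces ($q = 0$) are $F$-stable subquotients of $H^0(V^{[p]}, \sO_{V^{[p]}}) \cong \prod \kappa(\mu)$, on which the bijection $x \mapsto x^p$ remains bijective on any subquotient. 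The \emph{top geometric} pieces $E_\infty^{0,n} = E_1^{0,n} = \bigoplus_i H^n(V_i, \sO_{V_i})$ are annihilated by no differentials, and if nonzero are handled by Lemma \ref{p-adicHodge}(1) for $n=1$ or (2) for $n=2$ applied to any component with $H^n \ne 0$: after a finite extension of $\mathrm{Frac}(A)$, a density-one (resp.\ positive-density) set of primes yields non-nilpotent Frobenius on $H^n((V_i)_\nu, \sO)$ and hence on the direct sum. This fully settles $n = 1$.

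\textbf{The middle weight-one pieces for $n=2$ and the main obstacle.} The remaining \emph{middle} pieces $E_\infty^{1,1} \subseteq H^2$ and $E_\infty^{0,1} \subseteq H^1$ are proper subquotients (a quotient by the image of $d_1$, respectively the kernel of $d_1$) of the weight-one direct sums $\bigoplus_c H^1(V^{[1]}_c, \sO)$ and $\bigoplus_i H^1(V_i, \sO)$. The main obstacle is that non-nilpotence of Frobenius on an ambient $E_1^{p,1}$, although guaranteed by Lemma \ref{p-adicHodge}(1), need not descend to an $F$-stable subquotient: the $F$-stable image or kernel of a differential could in principle absorb the entire semisimple part. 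To overcome this, I would invoke Deligne's theorem identifying each polarizable pure Hodge structure of weight one, in particular $\mathrm{Gr}^W_1 H^i(V, \Q)$, with the first rational cohomology $H^1(B, \Q)$ of an abelian variety $B$ defined over a finite extension $L$ of $\mathrm{Frac}(A)$, compatibly with the Hodge filtration so that $\mathrm{Gr}^W_1 H^i(V, \sO_V) \cong H^1(B, \sO_B)$. Applying Lemma \ref{p-adicHodge}(1) to $B$ then yields a density-one set of primes $\nu$ on which Frobenius is non-nilpotent on $H^1(B_\nu, \sO_{B_\nu}) \cong E_\infty^{p,1}(V_\nu)$, using compatibility of the weight filtration with reduction modulo $p$ (valid after further shrinking of $A$). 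Taking a common finite extension of $\mathrm{Frac}(A)$ and intersecting the density-one sets over the finitely many nonzero pieces yields the required dense $S \subseteq \Spec A$.
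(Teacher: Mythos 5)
Your overall strategy matches the paper's: the Mayer--Vietoris spectral sequence for the stratification of $V$ by multiple intersections of components, its degeneration at $E_2$, $F$-stability of the induced filtration on $H^i(V_\mu,\sO_{V_\mu})$, and reduction of the weight-one graded pieces to Lemma~\ref{p-adicHodge} via an abelian-variety interpretation; the paper also handles the combinatorial pieces $E_2^{p,0}$ by the bijectivity of Frobenius on subquotients of $\prod\kappa(\mu)$, and the piece $E_1^{0,n}$ by applying Lemma~\ref{p-adicHodge} componentwise. However, there is a gap right at the start: Lemma~\ref{p-adicHodge} is a statement about smooth projective varieties over a \emph{number field}, and the density of primes it produces is density in the sense of Chebotarev. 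In Conjecture~$\mathrm{N}_n$, $k$ is an arbitrary algebraically closed field of characteristic zero, so $A$ may have positive transcendence degree and $\mathrm{Frac}(A)$ need not be a number field; a ``finite extension of $\mathrm{Frac}(A)$'' is then still not a number field and Lemma~\ref{p-adicHodge} does not apply. One must first reduce to $k=\overline{\Q}$ by a spreading-out argument (the paper invokes the analogue of \cite[Proposition~5.3]{MS}); this step is missing from your write-up and cannot be skipped.

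The second gap is in your treatment of the middle pieces $E_2^{0,1}$ and $E_2^{1,1}$. The Deligne/Riemann classification of polarizable weight-one $\Q$-Hodge structures by isogeny classes of abelian varieties is an equivalence over $\C$; it does not by itself produce the abelian variety $B$ over a number field $L$, nor a reduction-compatible identification $E_2^{p,1}(V_\nu)\cong H^1(B_\nu,\sO_{B_\nu})$ for almost all $\nu$. The paper avoids this entirely by constructing $B$ concretely: $E_2^{0,1}$ and $E_2^{1,1}$ are the Lie algebras of the kernel and cokernel, respectively, of morphisms between the products $\prod\mathrm{Pic}^0(V_{i_0,\dots,i_p})$ of Picard varieties of the smooth strata. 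These are morphisms of abelian schemes over a ring of integers, their kernels and cokernels are again abelian varieties defined over the same number field, and the whole construction spreads out and commutes with reduction modulo almost every prime, which is exactly the compatibility you assert but do not prove. You should replace the appeal to the abstract classification theorem by this explicit Albanese/Picard argument; as written you are assuming precisely the descent and reduction properties of $B$ that need to be established.
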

\begin{proof}
By Remark \ref{remark conjN} (2), it suffices to consider the case where $n=2$. 
By an argument similar to the proof of \cite[Proposition 5.3]{MS}, we may assume
that $k =\overline{\Q}$ without loss of generality. 

Let $V=\sum_i V_i$ be the irreducible decomposition of $V$, and denote $V_{i_0, \dots, i_p}:=V_{i_0} \cap \cdots \cap V_{i_p}$ for $i_0<\cdots<i_p$. 
Then there exists the following spectral sequence (see \cite[4.13]{Fu}):
\[
E_1^{p,q}=\bigoplus_{i_0<\cdots<i_p}H^q(V_{i_0, \dots, i_p}, \sO_{V_{i_0, \dots, i_p}}) \Rightarrow E^{p+q}=H^{p+q}(V, \sO_V).
\]
This spectral sequence degenerates at the $E_2$ term, because after extending the scalars to $\C$,  its terms and differentials are isomorphic to the zeroth graded pieces of the Hodge filtrations on those of the Mayer-Vietoris spectral sequence 
\[
'E_1^{p,q}=\bigoplus_{i_0<\cdots<i_p}H^q(V_{i_0, \dots, i_p}, \C) \Rightarrow H^{p+q}(V, \C), 
\]
which degenerates at the $E_2$ term. 

First, we consider the case where $E^2=H^2(V, \sO_V) \ne 0$.  
Suppose to the contrary that there exists a dense open subset $T \subseteq \Spec A$ such that the Frobenius action on $H^2(V_{\mu}, \sO_{V_{\mu}})$ is nilpotent for all closed points $\mu \in T$. 
Since we have a surjection $E^2 \to  E_2^{0,2}$ and 
\[
E_2^{0,2} = H(E_{1}^{-1, 2}\to E_{1}^{0,2} \to E_1^{1,2})=\bigoplus_i H^2(V_{i}, \sO_{V_{i}}), 
\]
if $H^2(V_{i}, \sO_{V_{i}}) \ne 0$ for some $i$, then the Frobenius action on $H^2(V_{i, \mu}, \sO_{V_{i, \mu}})$ has to be nilpotent for all closed points $\mu \in T$. 
However, we have already seen in Lemma \ref{p-adicHodge} (2) that Conjecture $\mathrm{N}_2$ holds for smooth projective surfaces, so this is a contradiction. Therefore, $H^2(V_{i}, \sO_{V_{i}})=0$ for all $i$, that is, $E_2^{0,2}=0$. 
In this case, the above spectral sequence induces a surjection $E^2 \to E_2^{1,1}$, where 
\[E_2^{1,1} = H(E_1^{0,1} \to E_1^{1,1} \to E_1^{2,1})=\mathrm{coker} \left(\bigoplus_i H^1(V_{i}, \sO_{V_{i}}) \to \bigoplus_{i<j}H^1(V_{i, j}, \sO_{V_{i, j}})\right).\]
If $E_2^{1,1} \ne 0$,then the Frobenius action on $E_{2, \mu}^{1,1}$ has to be nilpotent for all closed points $\mu \in T$. 
On the other hand, by considering Albanese varieties, one can think of $E_2^{1,1}$ as the first cohomology group $H^1(X, \sO_{X})$ of a positive-dimensional abelian variety $X$ over $k$. 
It follows from Lemma \ref{p-adicHodge} (1) that there exists a dense subset of closed points $S \subseteq \Spec A$ such that 
the Frobenius action on $H^1(X_{\mu},\sO_{X_{\mu}}) = E_{2, \mu}^{1,1}$ is non-nilpotent for all $\mu \in S$. This is a contradiction. 
Therefore, $E_{2}^{1,1}=0$ and then $E^2 \cong E_2^{2, 0}$.
Since 
\[
E_2^{2,0} = H(E_1^{1,0} \to E_1^{2, 0} \to E_1^{3, 0})=\mathrm{coker}\left(\bigoplus_{i<j}H^0(V_{i, j}, \sO_{V_{i, j}}) \to \bigoplus_{i<j<l}H^0(V_{i, j, l}, \sO_{V_{i, j, l}})\right), 
\]
the Frobenius action on $H^2(V_{\mu}, \sO_{V_{\mu}})=E^2_{\mu} \cong E^{2,0}_{2, \mu}$ is bijective for all closed points $\mu \in \Spec A$.  
This is a contradiction again. 
Thus, there exists a dense subset of closed points $S_2 \subset \Spec A$ such that the Frobenius action on $H^2(V_{\mu}, \sO_{V_{\mu}})$ is non-nilpotent for all $\mu \in S_2$. 

Next, we consider the case where $E^1=H^1(V, \sO_V) \ne 0$. 
By the above spectral sequence, we have a surjection $E^1 \to E_2^{0,1}$, where 
\[
E_2^{0,1}=H(E_1^{-1, 1} \to E_1^{0,1} \to E_1^{1,1})=\ker \left(\bigoplus_i H^1(V_{i}, \sO_{V_{i}}) \to \bigoplus_{i<j} H^1(V_{i, j}, \sO_{V_{i, j}})\right).
\]
When $E_2^{0,1} \ne 0$, by considering Albanese varieties, one can think of $E_2^{0,1}$ as the first cohomology group $H^1(Y, \sO_{Y})$ of a positive-dimensional abelian variety $Y$ over $k$. 
It follows from Lemma \ref{p-adicHodge} (1) that there exists a dense subset of closed points $S'\subseteq \Spec A$ such that the Frobenius action on $E_{2, \mu}^{0,1}$ is non-nilpotent for all $\mu \in S'$. 
In particular, the Frobenius action on $H^1(V_{\mu}, \sO_{V_{\mu}})=E^1_{\mu}$ is non-nilpotent for all $\mu \in S'$. 
When $E_2^{0,1}=0$, we have an isomorphism $E^1 \cong E_2^{1, 0}$. 
Since 
\begin{align*}
E_2^{1,0}&=H(E_1^{0, 0} \to E_1^{1,0} \to E_1^{2,0})\\
&=H\left(\bigoplus_i H^0(V_i, \sO_{V_i}) \to \bigoplus_{i<j}H^0(V_{i, j}, \sO_{V_{i, j}}) \to \bigoplus_{i<j<l}H^0(V_{i, j, l}, \sO_{V_{i, j, l}})\right), 
\end{align*}
the Frobenius action on $H^1(V_{\mu}, \sO_{V_{\mu}})=E^1_{\mu} \cong E_2^{1,0}$ is bijective for all closed points $\mu \in \Spec A$.  
Thus, in either case, there exists a dense subset of closed points $S_1 \subseteq \Spec A$ such that the Frobenius action on $H^1(V_{\mu},\sO_{V_{\mu}})$ is non-nilpotent for all $\mu \in S_1$. 

Finally, we consider the case where $H^1(V, \sO_V)$ and $H^2(V, \sO_V)$ are both nonzero.  
Since the density of $S_1$ (resp.~$S_2$) depends on Lemma \ref{p-adicHodge} (1) (resp.~ Lemma \ref{p-adicHodge} (2)), 
after enlarging the $\Z$-subalgebra $A$ of $k$ if necessary, we may assume that  $S_1$ has density one and $S_2$ has positive density. 
Then $S_1 \cap S_2$ is a dense subset of closed points in $\Spec A$, and the Frobenius actions on 
$H^1(V_{\mu}, \sO_{V_{\mu}})$ and on $H^2(V_{\mu}, \sO_{V_{\mu}})$ are both non-nilpotent for all $\mu \in S_1 \cap S_2$. 
\end{proof}

The following is a key result of this paper. We obtain a cohomological characterization of rings of $F$-nilpotent type in the case of isolated singularities. 
\begin{thm}\label{vanishing of H^n-1}
Let $(x \in X)$ be an $n$-dimensional normal singularity over an algebraically closed field $k$ of characteristic zero such that $x$ is an isolated non-rational point of $X$. 
Let $\pi:Y \to X$ be a projective log resolution of $(x \in X)$ and $Z$ be the closed fiber $\pi^{-1}(x)$ with reduced scheme structure. 
Suppose that Conjecture $\mathrm{N}_{n-1}$ holds true.  
Then $(x \in X)$ is of $F$-nilpotent type if and only if $H^{i}(Z, \sO_Z)=0$ for all $i \ge 1$. 
\end{thm}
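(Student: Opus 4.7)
My plan is to reduce the theorem, via an intermediate equivalence on the cohomology of $Z_p$, to a direct application of Conjecture $\mathrm{N}_{n-1}$. The bulk of the work will lie in establishing the following \emph{key equivalence}: for almost all closed points $p$ of $\Spec A$, the local ring $\sO_{X_p, x_p}$ is $F$-nilpotent if and only if the natural Frobenius action on $H^i(Z_p, \sO_{Z_p})$ is nilpotent for every $i \ge 1$.

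To prove the key equivalence, I will first note that, since $x$ is the only non-rational point, $X \setminus \{x\}$ has at worst rational singularities, so by Theorem \ref{correspondence}(1) the punctured spectrum of $\sO_{X_p, x_p}$ is $F$-rational for $p \gg 0$. Passing to the strict henselization is harmless by Proposition \ref{basic}(4), so I may assume that the residue field is algebraically closed, and Proposition \ref{fixed point free} then reformulates $F$-nilpotence of $\sO_{X_p, x_p}$ as the injectivity of $\mathrm{id} - F$ on every $H^i_{\m_{x_p}}(\sO_{X_p, x_p})$. I will then apply the Artin--Schreier sequence $0 \to \F_p \to \sO \xrightarrow{F-1} \sO \to 0$ in the étale topology, combined with the finite dimensionality of each $H^i(Z_p, \sO_{Z_p})$ and the semisimple/nilpotent decomposition of $p$-linear maps recalled in Section~1.3, to translate each side of the key equivalence into the vanishing of étale $\F_p$-cohomology groups. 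The remaining comparison
\[
H^i_{\textup{\'et}}(Z_p, \F_p) = 0 \iff H^{i+1}_{\{x_p\}, \textup{\'et}}(\Spec \sO_{X_p, x_p}^{\mathrm{sh}}, \F_p) = 0 \qquad (i \ge 1)
\]
will follow by combining the identification $Y_p \setminus Z_p \cong X_p \setminus \{x_p\}$ with proper base change, which matches $H^i_{\textup{\'et}}(Z_p, \F_p)$ with the étale cohomology of the formal completion of $Y_p$ along $Z_p$, whose excision sequence then recovers étale local cohomology at $x_p$ up to a degree shift.

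Granting the key equivalence, the theorem falls out. If $H^i(Z, \sO_Z) = 0$ for every $i \ge 1$, then by flat base change and upper semicontinuity $H^i(Z_p, \sO_{Z_p}) = 0$ for every $i \ge 1$ and almost all $p$, so Frobenius is trivially nilpotent and the key equivalence yields $F$-nilpotent type. Conversely, if $(x \in X)$ is of $F$-nilpotent type, the key equivalence forces Frobenius to act nilpotently on $H^i(Z_p, \sO_{Z_p})$ for every $i \ge 1$ and almost all $p$; since $Z$ is an $(n-1)$-dimensional projective simple normal crossing variety, any nonvanishing $H^i(Z, \sO_Z)$ would, by Conjecture $\mathrm{N}_{n-1}$, produce a dense set of closed points $p$ at which Frobenius on $H^i(Z_p, \sO_{Z_p})$ is non-nilpotent---contradicting what we have just shown.

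The hardest part will be the key equivalence, specifically the comparison of the two Artin--Schreier vanishings on $Z_p$ and on the strict henselization of $\sO_{X_p, x_p}$. The obstruction is less conceptual than a matter of bookkeeping: making the formal-completion and proper-base-change identifications, together with the degree shift, fit consistently across the various long exact sequences in the étale topology.
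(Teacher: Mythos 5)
Your overall architecture is the same as the paper's: establish that for almost all $p$, $\sO_{X_p, x_p}$ is $F$-nilpotent iff Frobenius acts nilpotently on every $H^i(Z_p, \sO_{Z_p})$, and then apply Conjecture $\mathrm{N}_{n-1}$. (In fact the paper records exactly your ``key equivalence'' as Remark~\ref{fixed char thm}(1).) But there is a genuine gap in your proposed proof of that equivalence, and it is precisely the step you wave at with ``whose excision sequence then recovers \'etale local cohomology at $x_p$ up to a degree shift.''

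The localization sequence on $Y_p$ reads
\[
\cdots \to H^i_{Z_p}\bigl((Y_p)_{\textup{\'et}}, \F_p\bigr) \to H^i\bigl((Y_p)_{\textup{\'et}}, \F_p\bigr) \to H^i\bigl((Y_p \setminus Z_p)_{\textup{\'et}}, \F_p\bigr) \to H^{i+1}_{Z_p}\bigl((Y_p)_{\textup{\'et}}, \F_p\bigr) \to \cdots,
\]
and proper base change identifies the middle term with $H^i((Z_p)_{\textup{\'et}}, \F_p)$. To extract the asserted equivalence $H^i((Z_p)_{\textup{\'et}}, \F_p)=0 \iff H^{i+1}_{\{x_p\}}(\Spec R_p^{\mathrm{sh}}, \F_p)=0$ from this sequence, you must kill the boundary terms $H^i_{Z_p}((Y_p)_{\textup{\'et}}, \F_p)$ for $i$ in the relevant range. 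This is not automatic: it is exactly where the Grauert--Riemenschneider vanishing theorem enters. Reducing GR vanishing modulo $p$ for $p\gg 0$ gives $H^i_{Z_p}(Y_p, \sO_{Y_p})=0$ for $i \le n-1$, and feeding that into the Artin--Schreier sequence with supports in $Z_p$ yields $H^i_{Z_p}((Y_p)_{\textup{\'et}}, \F_p)=0$ for $i \le n-1$, which is what the localization sequence needs \emph{in both directions}. Without this vanishing your proposed equivalence simply does not follow from excision plus proper base change. The paper's proof has this ingredient explicitly, and its Remark~\ref{fixed char thm}(2) even flags GR vanishing as the essential hypothesis when extending to positive characteristic.

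There is also a smaller but real inaccuracy: you assert the identification $Y_p \setminus Z_p \cong X_p \setminus \{x_p\}$. This holds only when $X$ is smooth away from $x$, whereas the theorem only assumes $x$ is an isolated \emph{non-rational} point; $\pi$ may contract divisors lying over the rational singular locus of $X\setminus\{x\}$. The paper sidesteps this by comparing only \emph{coherent} cohomology, using $R\pi_*\sO_Y = \sO_X$ over the rational locus (Leray plus $F$-rationality of the punctured spectrum for $p\gg 0$) to get $H^i(Y_p \setminus Z_p, \sO_{Y_p}) \cong H^{i+1}_{\m}(R_p)$, and then running Artin--Schreier on $Y_p \setminus Z_p$ rather than on $\Spec R_p \setminus \{x_p\}$. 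You should do the same rather than asserting a scheme-level isomorphism.
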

\begin{proof}
Suppose that $(x \in X)$ is of $F$-nilpotent type. 
Then we are given a model of $(\pi, Z)$ over a finitely generated $\Z$-subalgebra $A$ of $k$ such that  $\sO_{X_{\mu}, x_{\mu}}$ is $F$-nilpotent for all closed points $\mu \in \Spec A$. 
For each $\mu \in \Spec A$, we denote by $(X_{\overline{\mu}}, x_{\overline{\mu}})$ the base change of $(X_{\mu}, x_{\mu})$ to an algebraic closure $\overline{k(\mu)}$ of $k(\mu)$. 
Replacing $\sO_{X_{\overline{\mu}}, x_{\overline{\mu}}}$ by its completion, we may assume that $X_{\overline{\mu}}=\Spec R_{\overline{\mu}}$ where $R_{\overline{\mu}}$ is a complete local ring with algebraically closed residue field $\overline{k(\mu)}$. 
It follows from Proposition \ref{basic} (3) that $R_{\overline{\mu}}$ is $F$-nilpotent. 

We consider the Artin-Schreier sequence in the \'etale topology
\[ 
0 \to \Z/p \Z \to \sO_{Y_{\overline{\mu}}} \xrightarrow{{\rm id}-F} \sO_{Y_{\overline{\mu}}} \to 0, 
\]
which induces the long exact sequence 
\[
\cdots \to H^i\left((Y_{\overline{\mu}} \setminus Z_{\overline{\mu}})_{\textup{\'et}}, \Z/p \Z \right) \to H^i\left(Y_{\overline{\mu}} \setminus Z_{\overline{\mu}}, \sO_{Y_{\overline{\mu}}}\right)
\xrightarrow{{\rm id}-F} H^i\left(Y_{\overline{\mu}} \setminus Z_{\overline{\mu}}, \sO_{Y_{\overline{\mu}}}\right) \to \cdots.
 \]
By \cite[Lemma 1.15]{HS}, the map $\mathrm{id}-F$ is surjective on 
\[
H^0(Y_{\overline{\mu}}  \setminus Z_{\overline{\mu}}, \sO_{Y_{\overline{\mu}}}) \cong H^0(X_{\overline{\mu}}  \setminus \{x_{\overline{\mu}}\}, \sO_{X_{\overline{\mu}}})=R_{\overline{\mu}}.
\]
Note that $H^i(Y_{\overline{\mu}} \setminus Z_{\overline{\mu}}, \sO_{Y_{\overline{\mu}}}) \cong H^i(X_{\overline{\mu}} \setminus \{x_{\overline{\mu}}\}, \sO_{X_{\overline{\mu}}}) \cong H^{i+1}_{\{ x_{\overline{\mu}}\}}(\sO_{X_{\overline{\mu}}})$ for all $i \ge 1$. 
Since $x$ is an isolated non-rational point of $X$, after possibly enlarging $A$, we may assume that $H^{i+1}_{\{ x_{\overline{\mu}}\}}(\sO_{X_{\overline{\mu}}})$ is a finitely generated $R_{\overline{\mu}}$-module 
for all $i  \le n-2$ and for all closed points $\mu \in \Spec A$. 
It then follows from \cite[Lemma 1.15]{HS} and Proposition \ref{fixed point free} that for each $\mu \in \Spec A$, the map $\mathrm{id}-F: H^{i+1}_{\{ x_{\overline{\mu}}\}}(\sO_{X_{\overline{\mu}}}) \to H^{i+1}_{\{ x_{\overline{\mu}}\}}(\sO_{X_{\overline{\mu}}})$ is bijective for all $i \le n-2$ and is injective for $i=n-1$.  
By looking at the above long exact sequence, we see that for all $1 \le i \le n-1$ and for all $\mu \in \Spec A$, 
\begin{subequations}
\begin{equation}\label{vanishing}
H^i((Y_{\overline{\mu}} \setminus Z_{\overline{\mu}})_{\textup{\'et}}, \Z/p \Z)=0. 
\end{equation}
\end{subequations}

On the other hand, the above Artin-Schreier sequence also induces the exact sequence
\[
H^{i-1}_{Z_{\overline{\mu}}}(Y_{\overline{\mu}}, \sO_{Y_{\overline{\mu}}}) \to H^{i}_{Z_{\overline{\mu}}}((Y_{\overline{\mu}})_{\textup{\'et}}, \Z /p \Z)  \to H^{i}_{Z_{\overline{\mu}}}(Y_{\overline{\mu}}, \sO_{Y_{\overline{\mu}}}) 
\]
for all $i \ge 0$. 
By the Grauert-Riemenschneider vanishing theorem, after possibly enlarging $A$, we may assume that $H^{i}_{Z_{\overline{\mu}}}(Y_{\overline{\mu}}, \sO_{Y_{\overline{\mu}}})=0$ for all $i \le n-1$ and for all closed points $\mu \in \Spec A$.  
Hence, $H^{i}_{Z_{\overline{\mu}}}((Y_{\overline{\mu}})_{\textup{\'et}}, \Z /p \Z)=0$ for all $i \le n-1$. 
Applying this to the localization exact sequence  
\[
H^{i}_{Z_{\overline{\mu}}}((Y_{\overline{\mu}})_{\textup{\'et}}, \Z /p \Z) \to 
H^{i}((Y_{\overline{\mu}})_{\textup{\'et}}, \Z /p \Z) \to  
H^{i}((Y_{\overline{\mu}} \setminus Z_{\overline{\mu}})_{\textup{\'et}}, \Z /p \Z)
\] 
together with (\ref{vanishing}), we have the fact that 
\[
H^{i}((Z_{\overline{\mu}})_{\textup{\'et}}, \Z /p \Z) \cong H^{i}((Y_{\overline{\mu}})_{\textup{\'et}}, \Z /p \Z)=0
\]
for all $1 \le i \le n-1$ and all $\mu \in \Spec A$, where the first isomorphism follows from the proper base change theorem for  \'{e}tale cohomology. 
By the Artin-Schreier sequence on $Z_{\overline{\mu}}$ (see also \cite[Proposition 5.1]{HS}), 
\[
H^{i}(Z_{\overline{\mu}}, \sO_{Z_{\overline{\mu}}})_{\rm ss}
\cong H^{i}((Z_{\overline{\mu}})_{\textup{\'et}}, \Z /p \Z) \otimes_{\Z /p \Z} \overline{k(\mu)} =0, 
\]
that is, the Frobenius action on $H^{i}(Z_{\overline{\mu}}, \sO_{Z_{\overline{\mu}}})$ is nilpotent for all $1 \le i \le n-1$ and all $\mu \in \Spec A$. 
It follows from an application of Conjecture $\mathrm{N}_{n-1}$ to $Z$ that there exists a dense subset of closed points $S \subseteq \Spec A$ such that $H^{i}(Z_{\overline{\mu}}, \sO_{Z_{\overline{\mu}}})=0$ for all $1 \le i \le n-1$ and all $\mu \in S$. 
Thus, we conclude that $H^{i}(Z, \sO_Z)=0$ for all $1 \le i \le n-1$. 

For the converse, just reverse the above argument. 
The theorem is proved. 
\end{proof}

\begin{rem}\label{fixed char thm}
By the same argument as that of Theorem \ref{vanishing of H^n-1}, we can prove the following, without assuming Conjecture $\mathrm{N}_{n-1}$. 
For (2), note that the Grauert--Riemenschneider vanishing theorem holds for two-dimensional excellent local rings. 

\begin{enumerate}
\item
Let the notation be the same as in Theorem \ref{vanishing of H^n-1}, and suppose that we are given a model of $(x \in X)$ over a finitely generated $\Z$-subalgebra $A$ of $k$. 
Then $(x \in X)$ is of $F$-nilpotent type if and only if there exists a dense open subset $S \subseteq \Spec A$ such that the Frobenius action on $H^i(Z_{\mu}, \sO_{Z_{\mu}})$ is nilpotent for all $i \ge 1$ and for all closed points $\mu \in S$. 

\item
Let $(R, \m)$ be a two-dimensional $F$-finite normal local ring with algebraically closed residue field of characteristic $p>0$. 
Let $\pi:Y \to X$ be a log resolution of $(x \in X):=(\m \in \Spec R)$ and $Z$ be the closed fiber $\pi^{-1}(x)$ with reduced scheme structure. 
Then $R$ is $F$-nilpotent if and only if the natural Frobenius action on $H^1(Z, \sO_Z)$ is nilpotent. 
In particular, if $Z$ is a tree of smooth rational curves (which is equivalent to saying that $H^1(Z, \sO_Z)=0$), then $R$ is $F$-nilpotent. 
\end{enumerate}
\end{rem}

\begin{rem}
In the setting of Theorem \ref{vanishing of H^n-1}, it suffices to have the following conjecture in order to prove the equivalence of the $F$-nilpotence of $(x \in X)$ and the vanishing of $H^i(Z, \sO_Z)$ for all $i \ge 1$: 

\begin{conjC'}
Let $V$ be an $n$-dimensional projective simple normal crossing variety over an algebraically closed field $k$ of characteristic zero. 
Suppose that we are given a model of $V$ over a finitely generated $\Z$-subalgebra $A$ of $k$. 
If there exists a dense open subset $S \subseteq \Spec A$ such that the natural Frobenius action on $H^i(V_{\mu}, \sO_{V_{\mu}})$ is nilpotent for all closed points $\mu \in S$ and all $i \ge 1$,  then $H^i(V, \sO_V)=0$ for all $i \ge 1$. 
\end{conjC'}
Conjecture $\mathrm{N}_n$ clearly implies Conjecture $\mathrm{N}_n'$, and we suspect that  Conjecture $\mathrm{N}_n'$ is weaker than Conjecture $\mathrm{N}_n$. 
We also remark that Conjecture $\mathrm{N'}_n$ implies Conjecture $\mathrm{H}_n$ by the same argument as in Theorem \ref{main result}.
\end{rem}

As an immediate corollary of Theorem \ref{vanishing of H^n-1}, we show a statement analogous to Proposition \ref{basic} (1), assuming Conjecture $\mathrm{N}_{n-1}$. 
\begin{cor}
\begin{enumerate}
\item
If $(x \in X)$ is a rational singularity, then it is of $F$-nilpotent type. 
\item 
Let $(x \in X)$ be an $n$-dimensional normal isolated singularity over an algebraically closed field $k$ of characteristic zero. 
Suppose that Conjecture $\mathrm{N}_{n-1}$ holds true. 
Then $(x \in X)$ is a rational singularity if and only if it is Du Bois and of $F$-nilpotent type. 
\end{enumerate}
\end{cor}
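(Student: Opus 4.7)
The plan is to combine the characterization of rational singularities via $F$-rationality (Theorem \ref{correspondence}(1)), the implication that $F$-rational forces $F$-nilpotent (Proposition \ref{basic}(1)), and the cohomological characterization of $F$-nilpotent type proved in Theorem \ref{vanishing of H^n-1}, together with Steenbrink's description of Du Bois normal isolated singularities stated in Section 1.

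For part (1), I would first apply Theorem \ref{correspondence}(1): a rational singularity $(x \in X)$ is of $F$-rational type, so after shrinking $\Spec A$ we may assume $\sO_{X_\mu, x_\mu}$ is $F$-rational at every closed point $\mu$. Proposition \ref{basic}(1) gives, pointwise, that any $F$-rational ring is automatically $F$-nilpotent (indeed $H^i_{\m}=0$ for $i<d$ and $0^*_{H^d_\m}=0$), so $(x\in X)$ is of $F$-nilpotent type.

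For part (2), the forward direction is immediate: a rational singularity is Du Bois (a classical fact from Hodge theory, due to Kovács/Saito in the normal isolated case and going back to Steenbrink), and it is of $F$-nilpotent type by part (1). For the converse, assume $(x\in X)$ is Du Bois and of $F$-nilpotent type, and fix a projective log resolution $\pi\colon Y\to X$ with $Z=\pi^{-1}(x)_{\mathrm{red}}$. By Steenbrink's characterization of Du Bois normal isolated singularities recalled in Section 1, the natural map
\[
(R^i\pi_*\sO_Y)_x \longrightarrow H^i(Z,\sO_Z)
\]
is an isomorphism for all $i\ge 1$. I would then argue by dichotomy: either $(x\in X)$ is already rational and we are done, or $x$ is an isolated non-rational point, in which case Theorem \ref{vanishing of H^n-1} (whose hypotheses include Conjecture $\mathrm{N}_{n-1}$, which we are assuming) applies and yields $H^i(Z,\sO_Z)=0$ for all $i\ge 1$. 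Combined with the Du Bois isomorphism, this forces $(R^i\pi_*\sO_Y)_x=0$ for $i\ge 1$, so $(x\in X)$ is rational, contradicting the assumption of non-rationality.

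There is essentially no serious obstacle here: the corollary is a clean consequence of Theorem \ref{vanishing of H^n-1}, Proposition \ref{basic}(1), and Steenbrink's characterization, and everything heavy (in particular the arithmetic input from Conjecture $\mathrm{N}_{n-1}$) is already absorbed into Theorem \ref{vanishing of H^n-1}. The only subtlety is the hypothesis ``non-rational point'' in that theorem, handled as above by splitting into cases; alternatively, one may simply observe that both sides of the desired equivalence are stable under the extra case ``$x$ is rational'', so the dichotomy is harmless.
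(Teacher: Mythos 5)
Your proof is correct and follows essentially the same route as the paper: part (1) from Theorem \ref{correspondence}(1) plus Proposition \ref{basic}(1), and part (2) by combining Steenbrink's Du Bois criterion for normal isolated singularities with the vanishing $H^i(Z,\sO_Z)=0$ supplied by Theorem \ref{vanishing of H^n-1}. The one place you go beyond what is written in the paper is the explicit dichotomy to deal with the hypothesis in Theorem \ref{vanishing of H^n-1} that $x$ be a non-rational point; the paper applies that theorem without comment, and your case split is a clean (and harmless) way to make the application rigorous.
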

\begin{proof}
(1)
It is immediate from Theorem \ref{correspondence} (1) and Proposition \ref{basic}. 

(2)
The ``only if" part follows from (1) and \cite{St}, so we will prove the ``if" part. 

Assume that $(x \in X)$ is Du Bois and of $F$-nilpotent type. 
Let $\pi:Y \to X$ be a projective log resolution of $(x \in X)$ such that $\pi|_{Y \setminus E}: Y \setminus E \to X \setminus \{x\}$ is an isomorphism, where $E=\sum_i E_i$ is the exceptional locus of $\pi$.
Since $(x \in X)$ is Du Bois, the natural map $(R^i \pi_*\sO_Y)_x \to H^i(E, \sO_E)$ is an isomorphism for all $i \ge 1$. 
On the other hand, by Theorem \ref{vanishing of H^n-1}, $H^i(E, \sO_E)=0$ for all $i \ge 1$. 
Thus, $(R^i \pi_*\sO_Y)_x=0$ for all $i \ge 1$, which means that $(x \in X)$ is a rational singularity. 
\end{proof}

Theorem \ref{vanishing of H^n-1} suggests the following more Hodge-theoretic (and resolution-free) characterization of isolated singularities of $F$-nilpotent type. 
\begin{conjD}
Let $(x \in X)$ be an $n$-dimensional normal isolated singularity over the field $\C$ of complex numbers $($and then $H^*_{\{x\}}(X_{\rm an}, \C)$ has a canonical mixed Hodge structure due to Steenbrink \cite{St}$)$. 
Then $(x \in X)$ is of $F$-nilpotent type if and only if the zeroth graded piece $\mathrm{Gr}^0_F H^{i}_{\{x\}}(X_{\rm an}, \C)$ of the Hodge filtration vanishes for all $i$.  
\end{conjD}

\begin{thm}\label{main result}
Conjecture $\mathrm{N}_{n-1}$ implies Conjecture $\mathrm{H}_{n}$. 
In particular, by Proposition \ref{conjN2}, Conjecture $\mathrm{H}_3$ holds true. 
That is, if $(x \in X)$ is a three-dimensional normal isolated singularity over $\C$, then $(x \in X)$ is of $F$-nilpotent type if and only if $\mathrm{Gr}^0_F H^{i}_{\{x\}}(X_{\rm an}, \C)=0$ for all $i$. 
\end{thm}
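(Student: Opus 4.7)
The plan is straightforward: combine Theorem \ref{vanishing of H^n-1} with the Hodge-theoretic reformulation of $\mathrm{Gr}^0_F H^{\bullet}_{\{x\}}(X_{\rm an}, \C)$ that was already announced in the introduction. Since $\mathrm{N}_2$ has been proved in Proposition \ref{conjN2}, the case $n = 3$ then falls out unconditionally.

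Concretely, fix a projective log resolution $\pi : Y \to X$ of $(x \in X)$ with simple normal crossing exceptional divisor $E = \sum_i E_i$ such that $\pi|_{Y \setminus E}$ is an isomorphism. Theorem \ref{vanishing of H^n-1}, applied under Conjecture $\mathrm{N}_{n-1}$, asserts that $(x \in X)$ is of $F$-nilpotent type if and only if $H^i(E, \sO_E) = 0$ for every $i \ge 1$. It therefore suffices to establish the purely complex-analytic equivalence
$$\mathrm{Gr}^0_F H^i_{\{x\}}(X_{\rm an}, \C) = 0 \text{ for all } i \quad \Longleftrightarrow \quad H^j(E, \sO_E) = 0 \text{ for all } j \ge 1.$$

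For this equivalence I would invoke Steenbrink's construction of the MHS on local cohomology \cite{St}. The two key ingredients are: (a) the identification $\mathrm{Gr}^0_F H^i_{\{x\}}(X_{\rm an}, \C) \cong \mathbb{H}^i_{\{x\}}(X, \underline{\Omega}_X^0)$ and (b) the distinguished triangle
$$\underline{\Omega}_X^0 \longrightarrow R\pi_* \sO_Y \oplus (i_{\{x\}})_* \sO_{\{x\}} \longrightarrow R\pi_* \sO_E \stackrel{+1}{\longrightarrow}$$
produced by the cubic hyperresolution associated to the cartesian square $E \hookrightarrow Y$, $\{x\} \hookrightarrow X$. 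Apply $R\Gamma_{\{x\}}(X, -)$ to this triangle; since $R\pi_* \sO_E$ is set-theoretically supported at $x$ we have $R\Gamma_{\{x\}}(X, R\pi_*\sO_E) = R\Gamma(E, \sO_E)$, and the resulting long exact sequence places $\mathrm{Gr}^0_F H^{\bullet}_{\{x\}}(X_{\rm an}, \C)$ between $H^{\bullet}(E, \sO_E)$ and $H^{\bullet}_E(Y, \sO_Y) \oplus \sO_{\{x\}}$. A short chase, using that $H^{\bullet}_E(Y, \sO_Y) \cong H^{\bullet}_{\{x\}}(X, \sO_X)$ (because $\pi_* \sO_Y = \sO_X$ and $R^i \pi_* \sO_Y$ is supported at $x$) and that the depth of $\sO_{X,x}$ is at least $2$ since $X$ is normal with $n \ge 2$, yields the stated equivalence.

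The main obstacle is marshalling the Hodge-theoretic input correctly: the identification of $\mathrm{Gr}^0_F H^{\bullet}_{\{x\}}$ with $\mathbb{H}^{\bullet}_{\{x\}}(X, \underline{\Omega}_X^0)$ and the triangle computing $\underline{\Omega}_X^0$ are classical (going back to Du Bois and Steenbrink), but they must be invoked with attention to the isolated-point support and to strict compatibility of the Hodge filtration with the maps of the local cohomology sequence. Once these are in place, the proof reduces to a formal long-exact-sequence chase combined with Theorem \ref{vanishing of H^n-1}; the substantive positive-characteristic content—the Artin-Schreier analysis and the appeal to $\mathrm{N}_{n-1}$—has already been packaged inside that earlier theorem, and specialization to $n = 3$ is immediate from Proposition \ref{conjN2}.
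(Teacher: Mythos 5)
Your plan diverges from the paper's proof at the crucial middle step, and the divergence introduces two genuine errors.

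The paper establishes the equivalence $\mathrm{Gr}^0_F H^{i+1}_{\{x\}}(X_{\rm an},\C)\cong H^i(E,\sO_E)$ not via the Du Bois complex but via Steenbrink's exact sequence of mixed Hodge structures $H^i(Y,Y\setminus E;\C)\to H^i(E_{\rm an},\C)\to H^{i+1}_{\{x\}}(X,\C)\to H^{i+1}(Y,Y\setminus E;\C)$, and then kills the outer terms: it algebraizes $(X,x)$ to a complete variety $V$ (Artin), passes to the corresponding resolution $W\to V$, uses excision to identify $H^i(Y,Y\setminus E)$ with $H^i(W_{\rm an},(W\setminus E)_{\rm an})$, and finally uses $E_1$-degeneration of the log Hodge-to-de Rham spectral sequence on the \emph{complete} variety $W$ to get $\mathrm{Gr}^0_F H^i(W,W\setminus E;\C)=0$. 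The algebraization step is not decoration; it is what makes the Hodge filtration computable by coherent sheaves.

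Your step (a), the identification $\mathrm{Gr}^0_F H^i_{\{x\}}(X_{\rm an},\C)\cong\mathbb{H}^i_{\{x\}}(X,\underline{\Omega}_X^0)$, is false. Take $X=\mathbb{A}^n$ (smooth, $n\ge 1$) and $x=0$: then $\underline{\Omega}_X^0=\sO_X$, so the right-hand side at $i=n$ is $H^n_{\{0\}}(\mathbb{A}^n,\sO)$, which is infinite-dimensional; but $H^\bullet_{\{x\}}(X_{\rm an},\C)$ is one-dimensional, concentrated in degree $2n$ with Hodge type $(n,n)$, so the left-hand side vanishes for all $i$. The Du Bois complex computes Hodge graded pieces of cohomology only for \emph{proper} varieties; the local hypercohomology $\mathbb{H}^\bullet_{\{x\}}(X,\underline{\Omega}_X^0)$ of the Stein/affine model does not carry the Steenbrink Hodge filtration. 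Remedying this forces exactly the compactification/algebraization that the paper performs.

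Your auxiliary claim $\mathbb{H}^\bullet_E(Y,\sO_Y)\cong H^\bullet_{\{x\}}(X,\sO_X)$ is also false and, worse, circular. The cone of the natural map $\sO_X\to R\pi_*\sO_Y$ has cohomology sheaves $R^i\pi_*\sO_Y$ ($i\ge 1$) supported at $x$; being supported at $x$ does not make their contribution to $\mathbb{H}^\bullet_E(Y,\sO_Y)$ disappear. The two groups agree precisely when $R^i\pi_*\sO_Y=0$ for $i\ge1$, i.e.\ when $(x\in X)$ is a rational singularity — the very dichotomy the theorem is meant to detect. So the proposed long-exact-sequence chase cannot be closed as written; the route through the Du Bois triangle (b), which is itself correct, would need a fundamentally different mechanism to extract the Hodge graded piece than taking $R\Gamma_{\{x\}}$ of $\underline\Omega_X^0$ on the affine model.
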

\begin{proof}
We may assume that  $X$ is a contractible Stein space and $x$ is the only singular point of $X$. 
Since $(x \in X)$ is normal, one has $H^{0}_{\{x\}}(X, \C)=H^{1}_{\{x\}}(X, \C)=0$. 
Let $\pi: Y \to X$ be a resolution of singularities such that $E:=\pi^{-1}(\{x\})$ is a simple normal crossing divisor and $\pi|_{Y \setminus E} :Y \setminus E \to X \setminus \{x\}$ is biholomorphic. 
We assume in addition that $\pi$ is projective. 
By Theorem \ref{vanishing of H^n-1}, it is enough to show that 
$H^i(E, \sO_E) \cong \mathrm{Gr}^0_F H^{i+1}_{\{x\}}(X, \C)$ for all $i \ge 1$. 

By \cite[(1.10)]{St}, we have the following exact sequence of mixed Hodge structures 
\[
H^{i}(Y, Y \setminus E; \C) \to H^i(E_{\rm an}, \C) \to H^{i+1}_{\{x\}}(X, \C) \to H^{i+1}(Y, Y \setminus E; \C)
\]
for all $i \ge 1$. Therefore, in order to show that 
\[
H^i(E, \sO_E) \cong \mathrm{Gr}_F^0 H^i(E_{\rm an}, \C) \cong \mathrm{Gr}^0_F H^{i+1}_{\{x\}}(X, \C)\] 
for all $i$, it suffices to prove that $\mathrm{Gr}^0_F H^{i}(Y, Y \setminus E; \C)=0$ for all $i$. 

By Artin's algebraization theorem, there exists a complete algebraic variety $V$ which contains $X$ as an open subset and is smooth outside $x$. 
Let $\widetilde{\pi}: W \to V$ be the resolution of singularities obtained by replacing $X$ by $Y$. 
By excision, we have an isomorphism of mixed Hodge structures $H^{i}(Y, Y \setminus E; \C) \cong H^{i}(W_{\rm an}, (W \setminus E)_{\rm an}; \C)$ for all $i$. 
Note that the Hodge filtration on $H^i((W \setminus E)_{\rm an}, \C)$ coincides with the one induced by the Hodge to de Rham spectral sequence
\[
E^{p,q}_1 = H^q(W, \Omega_{W/\C}^p(\log E)) \Rightarrow E^{p+q}=H^{p+q}((W \setminus E)_{\rm an}, \C),
\]
which degenerates at the $E_1$ term, because $E$ is a simple normal crossing divisor on the complete variety $W$. 
Hence, we obtain natural isomorphisms 
\[
\mathrm{Gr}^0_F H^i(W_{\rm an}, \C)\cong H^i(W, \sO_W) \cong \mathrm{Gr}^0_F H^i((W \setminus E)_{\rm an}, \C) 
\] 
for all $i$. 
Applying this to the exact sequence on $W$
\[
\cdots \to H^{i}(W_{\rm an}, (W \setminus E)_{\rm an}; \C) \to H^{i}(W_{\rm an}, \C) \to H^{i}((W \setminus E)_{\rm an}, \C) \to \cdots,
\]
we conclude that 
\[
\mathrm{Gr}^0_F H^{i}(Y, Y \setminus E; \C) \cong \mathrm{Gr}^0_F H^{i}(W_{\rm an}, (W \setminus E)_{\rm an}; \C)=0.
\]
\end{proof}


\section{A characterization in terms of divisor class groups and Brauer groups}
Using a correspondence of rational singularities and $F$-rational rings (see Theorem \ref{correspondence} (1)), we can reformulate a result of Lipman as follows.

\begin{thm}[\textup{cf.~\cite[Theorem 17.4]{Li}}]\label{Lipman}
Let $(R, \m)$ be a two-dimensional normal local ring essentially of finite type over an algebraically closed field $k$ of characteristic zero. 
Let $\widehat{R}$ denote the $\m$-adic completion of $R$. 
Then $R$ is of $F$-rational type if and only if the divisor class group $\mathrm{Cl}(\widehat{R})$ of $\widehat{R}$ is finite $($or, equivalently, $\mathrm{Cl}(\widehat{R})$ is torsion$)$. 
\end{thm}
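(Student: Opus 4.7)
The plan is to derive Theorem \ref{Lipman} by chaining together three equivalences:
\begin{equation*}
R \text{ is of $F$-rational type} \iff R \text{ is rational} \iff \widehat R \text{ is rational} \iff \mathrm{Cl}(\widehat R) \text{ is finite}.
\end{equation*}

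The first equivalence is the content of Theorem \ref{correspondence}(1). For the second, since $R$ is essentially of finite type over $k$ it is excellent, so $R \to \widehat R$ is faithfully flat with geometrically regular fibers. A projective log resolution $\pi: Y \to \Spec R$ of $(\m \in \Spec R)$ then base-changes to a projective log resolution $\widehat\pi: Y \times_R \widehat R \to \Spec \widehat R$, and flat base change gives
\begin{equation*}
R^i \widehat\pi_* \sO_{Y \times_R \widehat R} \cong (R^i \pi_* \sO_Y) \otimes_R \widehat R \qquad (i \geq 1).
\end{equation*}
Faithful flatness of $\widehat R$ over $R$ then guarantees that these higher direct images vanish on one side if and only if they vanish on the other, so rationality descends from $\widehat R$ to $R$ and ascends from $R$ to $\widehat R$.

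The third equivalence is precisely Lipman's Theorem 17.4 in \cite{Li}. The equivalence between \emph{finite} and \emph{torsion} appearing in the statement is already built into Lipman's theorem: if $\widehat R$ is a two-dimensional rational singularity with exceptional divisor $E = \sum_i E_i$ on a resolution, then $\mathrm{Cl}(\widehat R)$ is generated modulo the subgroup spanned by the classes of the $E_i$ by a finitely generated group, and the negative-definite intersection pairing on the $E_i$ forces the quotient to be finite; hence for rational singularities torsion automatically upgrades to finite.

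The argument presents no substantive obstacle beyond correctly invoking the three cited results: the statement is essentially a translation of Lipman's classical characterization into the language of $F$-singularities, with the bridge provided by the Hara--Mehta--Srinivas--Smith correspondence. The only minor technical point is the descent of rationality to the completion in the second step, which relies crucially on excellence (supplied by the essentially-of-finite-type hypothesis).
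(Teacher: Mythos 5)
Your proposal matches the paper's own derivation, which is given in a single sentence preceding the statement: combine Theorem~\ref{correspondence}(1) (the Hara--Mehta--Srinivas--Smith correspondence between rational singularities and $F$-rational type) with Lipman's Theorem~17.4, the passage to $\widehat{R}$ being routine since $R$ is excellent. The only wobble is your justification of the parenthetical ``finite iff torsion'': rather than assuming rationality, one should note that by Lipman's Proposition~14.2 the group $\mathrm{Cl}(\widehat{R})$ is an extension of a finite group by the connected commutative algebraic group $\mathrm{Pic}^0$ of a resolution, and over an algebraically closed field of characteristic zero such a connected group is a torsion group only if it is trivial, so ``torsion'' and ``finite'' coincide unconditionally.
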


As a corollary of Theorem \ref{vanishing of H^n-1}, we give a similar characterization of two-dimensional local rings of $F$-nilpotent type in terms of divisor class groups. 

\begin{thm}\label{divisor class group}
Let $(R, \m)$ be a two-dimensional normal local ring essentially of finite type over an algebraically closed field $k$ of characteristic zero. Let $\widehat{R}$ denote the $\m$-adic completion of $R$. 
Then $R$ is of $F$-nilpotent type if and only if the divisor class group $\mathrm{Cl}(\widehat{R})$ of $\widehat{R}$ does not contain the torsion group $\Q/\Z$. 
\end{thm}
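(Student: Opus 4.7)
The plan is to reduce via Theorem \ref{vanishing of H^n-1} to a geometric statement about a resolution, and then compare with $\mathrm{Cl}(\widehat{R})$ by directly analyzing Picard groups. Since Proposition \ref{conjN2} establishes Conjecture $\mathrm{N}_1$, Theorem \ref{vanishing of H^n-1} in dimension $n=2$ tells us that $R$ is of $F$-nilpotent type if and only if $H^1(Z,\sO_Z)=0$, where $\pi:Y\to\Spec\widehat{R}$ is a projective log resolution with reduced closed fiber $Z=\pi^{-1}(\m)_{\rm red}$ (the group $H^1(Z,\sO_Z)$ agrees for a resolution of $\Spec R$ or of $\Spec\widehat{R}$). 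So the theorem reduces to proving
\[
H^1(Z,\sO_Z)=0 \iff \Q/\Z\not\subseteq\mathrm{Cl}(\widehat{R}).
\]

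For the direction $(\Rightarrow)$, the genus formula for a projective simple normal crossing curve (from the normalization sequence) shows that $H^1(Z,\sO_Z)=0$ forces each component of $Z$ to be $\pP^1$ and the dual graph of $Z$ to be a tree. Artin's criterion then implies that $\widehat{R}$ is a rational singularity, hence of $F$-rational type, and Theorem \ref{Lipman} gives that $\mathrm{Cl}(\widehat{R})$ is finite. Since $\Q/\Z$ is infinite, it cannot embed into a finite group.

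For the direction $(\Leftarrow)$, assume $H^1(Z,\sO_Z)\neq 0$; I plan to lift a divisible subgroup of $\mathrm{Pic}^0(Z)$ to $\mathrm{Cl}(\widehat{R})$. The localization sequence on the regular scheme $Y$, together with injectivity of $\bigoplus_i\Z\cdot[E_i]\hookrightarrow\mathrm{Pic}(Y)$ (from negative-definiteness of the exceptional intersection matrix), gives
\[
\mathrm{Cl}(\widehat{R}) = \mathrm{Pic}(Y\setminus Z) = \mathrm{Pic}(Y)/\langle[E_1],\ldots,[E_r]\rangle.
\]
Grothendieck's existence theorem for the proper morphism $Y\to\Spec\widehat{R}$ identifies $\mathrm{Pic}(Y)$ with the formal Picard group $\mathrm{Pic}(\widehat{Y})=\varprojlim_n\mathrm{Pic}(Z_n)$, and the restriction $\mathrm{Pic}(\widehat{Y})\twoheadrightarrow\mathrm{Pic}(Z)$ is surjective because obstructions to lifting line bundles through successive thickenings lie in $H^2$ of coherent sheaves on the curve $Z$. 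The exponential sequence on $Z$ presents $\mathrm{Pic}^0(Z)$ as the quotient of $H^1(Z,\sO_Z)$ by a discrete subgroup; it is a positive-dimensional semi-abelian variety over $\C$, and in particular a divisible abelian group containing $\Q/\Z$. Let $P\subset\mathrm{Pic}(\widehat{Y})$ be the preimage of $\mathrm{Pic}^0(Z)$. For each $i$, the image of $[E_i]$ in $\mathrm{Pic}(Z)/\mathrm{Pic}^0(Z)\subseteq H^2(Z,\Z)$ is the intersection vector $(E_i\cdot E_j)_j$, and these vectors are $\Q$-linearly independent by negative-definiteness; hence $\langle[E_i]\rangle\cap P=0$ inside $\mathrm{Pic}(\widehat{Y})$, so $P$ injects into $\mathrm{Cl}(\widehat{R})$. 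Since $\mathrm{Pic}^0(Z)$ is divisible and therefore injective as an abelian group, the surjection $P\twoheadrightarrow\mathrm{Pic}^0(Z)$ splits, yielding $\Q/\Z\hookrightarrow\mathrm{Pic}^0(Z)\hookrightarrow P\hookrightarrow\mathrm{Cl}(\widehat{R})$.

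The main obstacle is verifying the comparison $\mathrm{Cl}(\widehat{R}) = \mathrm{Pic}(\widehat{Y})/\langle[E_i]\rangle$ together with the surjectivity of $\mathrm{Pic}(\widehat{Y})\twoheadrightarrow\mathrm{Pic}(Z)$ — the latter requiring $\varprojlim{}^1$-vanishing via a Mittag-Leffler argument on the finite-dimensional $\C$-vector spaces appearing in the successive kernels. With these standard-but-technical formal-geometry facts in hand, negative-definiteness of the intersection matrix and divisibility of $\mathrm{Pic}^0(Z)$ combine to produce the embedding cleanly.
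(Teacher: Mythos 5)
Your $(\Rightarrow)$ direction is wrong. Having $H^1(Z,\sO_Z)=0$ for the \emph{reduced} exceptional fiber $Z$ does not imply via Artin's criterion that $\widehat{R}$ is a rational singularity: Artin's criterion requires $p_a(D)\le 0$ for \emph{every} effective cycle $D$ supported on the exceptional locus, not only for $Z=\sum E_i$. A tree of $\pP^1$'s as reduced exceptional fiber is compatible with non-rationality, and this is precisely the phenomenon the theorem is designed to detect: the paper's Example 4.4 has $R=\C[x,y,z]/(x^2+y^3+z^7)$, which is of $F$-nilpotent type (so $H^1(Z,\sO_Z)=0$) yet $\mathrm{Cl}(\widehat{R})\cong\C$ is infinite, hence $\widehat{R}$ is \emph{not} rational by Theorem \ref{Lipman}. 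Your plan — rationality, then Lipman's finiteness — cannot start. The correct mechanism, as the paper shows, is the same chain of comparisons that gives $(\Leftarrow)$: since the successive kernels between $\mathrm{Cl}(\widehat{R})$, $\mathrm{Pic}^0$ of the resolution, and $\mathrm{Pic}^0(E)$ are finite or torsion-free, the ``contains $\Q/\Z$'' property transfers in both directions at once.

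Your $(\Leftarrow)$ direction also has a gap at the end. You invoke ``a surjection onto an injective module splits,'' but this is false — only \emph{injections from} an injective module split (compare $\Q\twoheadrightarrow\Q/\Z$, which does not split). What actually saves the argument is the structure of the kernel $K$ of $\mathrm{Pic}(\widehat{Y})\to\mathrm{Pic}(Z)$: the successive kernels in the tower are quotients of $H^1(Z,\mathcal{I}^n/\mathcal{I}^{n+1})$, finite-dimensional $k$-vector spaces with $\mathrm{char}\,k=0$, so $K$ is a (possibly infinite-dimensional) $\Q$-vector space — in particular torsion-free and divisible. Given a torsion element $q\in\mathrm{Pic}^0(Z)$ with $nq=0$ and a lift $p\in P$, one has $np\in K$; divisibility of $K$ gives $k$ with $nk=np$, and $p-k$ is a torsion lift of the same order. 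This lets you lift $\Q/\Z$ into $P$. You wave at the $\varprojlim{}^1$-vanishing but never record that the kernel is a $\Q$-vector space, and without that your splitting claim fails. The paper's route via \cite{CHR} and \cite{Li} packages exactly this point: the kernel of $\mathrm{Pic}^0(D)\to\mathrm{Pic}^0(E)$ is a successive extension of $\mathbb{G}_{\textup{a}}$'s (torsion-free divisible), and the cokernel of $\mathrm{Pic}^0(\widetilde{X})\to\mathrm{Cl}(R)$ is finite, so the largest divisible subgroup is preserved in both directions — giving the full equivalence in one pass rather than treating the two implications by disparate methods.
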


\begin{proof}
Let $\pi:\widetilde{X} \to X:=\Spec R$ be a log resolution of $(R, \m)$ such that $\pi|_{\widetilde{X} \setminus E}:\widetilde{X} \setminus E \to X \setminus \{\m\}$ is an isomorphism, where $E=\sum_i E_i$ denotes the exceptional locus of $\pi$. 
By Theorem \ref{vanishing of H^n-1}, it is enough to show that $H^1(E, \sO_E) \ne 0$ if and only if 
$\mathrm{Cl}(\widehat{R})$ contains $\Q/\Z$.  
Since $H^1(E, \sO_E)$ does not change after the completion of $R$, we may assume that $(R,\m)$ is a complete local ring with algebraically closed residue field. 

First note that $H^1(E, \sO_E) \ne 0$ if and only if the Picard scheme $\mathrm{Pic}^0(E)$ has positive dimension.
Since $E=\sum_i E_i$ is a simple normal crossing divisor, by \cite[p.488]{Ar}, there exists an exact sequence  
\[
0 \to \mathbb{G}_{\textup{m}}^{\alpha} \to \mathrm{Pic}^0(E) \to \prod_i \mathrm{Pic}^0(E_i) \to 0
\]
for some integer $\alpha \ge 0$ and each $\mathrm{Pic}^0(E_i)$ is an abelian variety. 
This means that $\mathrm{Pic}^0(E)$ is a semi-abelian variety, in particular, a divisible abelian  group.    
Therefore, $\mathrm{Pic}^0(E)$ has positive dimension if and only if $\mathrm{Pic}^0(E)$ contains $\Q/\Z$. 
By \cite[Lemma 5.4]{CHR}, there exists an effective divisor $D$ supported on $E$ such that $\mathrm{Pic}^0(X) \cong \mathrm{Pic}^0(D)$. 
It follows from an argument of \cite{Ar} that the kernel of the restriction map $\mathrm{Pic}^0(D) \to \mathrm{Pic}^0(E)$ has a composition series with factors isomorphic to $\mathbb{G}_{\textup{a}}$. 
Hence, we have an exact sequence
\[
0 \to \mathbb{G}_{\textup{a}}^{\beta} \to \mathrm{Pic}^0(X) \to \mathrm{Pic}^0(E) \to 0
\]
for some integer $\beta \ge 0$, where the subjectivity of the map $\mathrm{Pic}^0(X) \to \mathrm{Pic}^0(E)$ follows from \cite[Lemma 5.3]{CHR} (see also \cite[Lemma (14.3)]{Li}).
Since $\mathbb{G}_{\textup{a}}$ is torsion-free, $\mathrm{Pic}^0(E)$ contains $\Q/\Z$ if and only if $\mathrm{Pic}^0(X)$ does. 

By \cite[Proposition (14.2)]{Li}, there is another exact sequence
\[
0 \to \mathrm{Pic}^0(X) \to \mathrm{Cl}(R) \to H \to 0, 
\]
where $H$ is a finite group. 
Comparing the largest divisible subgroup of each group in this sequence, we see that $\mathrm{Pic}^0(X)$ contains $\Q/\Z$ if and only if 
$\mathrm{Cl}(R)$ does.  
Thus, summing up the above arguments, we conclude that $H^1(E, \sO_E) \ne 0$ if and only if $\mathrm{Cl}(R)$ contains $\Q/\Z$.  
\end{proof}

\begin{eg}
\begin{enumerate}
\item Let $(R, \m)=\left(\C[x,y,z]/(x^2+y^3+z^7)\right)_{(x, y, z)}$. 
Then $\mathrm{Cl}(\widehat{R})=\C$, which does not contain $\Q/\Z$, 
and $R$ is of $F$-nilpotent type by Example \ref{example charp} (1).   
\item Let $(R, \m)=\left(\C[x,y,z]/(x^2+y^3+z^7+xyz)\right)_{(x, y, z)}$. 
Then $\mathrm{Cl}(\widehat{R})=\C^{\times}$, which contains $\Q/\Z$ via the map $t \mapsto \mathrm{e}^{2 \pi t \sqrt{-1}}$, and $R$ is not of $F$-nilpotent type by Example \ref{example charp} (2). 
\end{enumerate}
\end{eg}

We also have a similar characterization of three-dimensional normal graded isolated singularities of $F$-nilpotent type in terms of Brauer groups. 
The \textit{cohomological Brauer group} $\mathrm{Br}'(X)$ of a scheme $X$ is defined to be the torsion part of $H^2(X_{\textup{\'{e}t}}, \mathbb{G}_{\textup{m}})$. 

\begin{thm}\label{Brauer}
Let $(R, \m)$ be a three-dimensional standard graded normal domain over an algebraically closed field $k$ of characteristic zero. 
Suppose that the localization $R_{\m}$ of $R$ at the homogeneous maximal ideal $\m$ is an isolated singularity, and put $\mathrm{Spec}^{\circ} R=\Spec R \setminus \{\m\}$. 
Then $R_{\m}$ is of $F$-nilpotent type if and only if neither $\mathrm{Br}'(\mathrm{Spec}^{\circ} R)$ nor $\mathrm{Cl}(R)$ contains $\Q/\Z$. 
\end{thm}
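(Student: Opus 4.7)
The starting point is to invoke Theorem \ref{main result}: since $\dim R = 3$ and Conjecture $\mathrm{N}_2$ holds by Proposition \ref{conjN2}, the $F$-nilpotent type property for $R_\m$ is equivalent, via Theorem \ref{vanishing of H^n-1}, to the vanishing $H^i(E, \sO_E) = 0$ for all $i \ge 1$, where $E$ is the reduced exceptional divisor of a projective log resolution $\pi \colon \widetilde X \to X = \Spec R$. Because $R$ is standard graded with isolated singularity at $\m$, the projective surface $V := \mathrm{Proj}(R)$ is smooth, and I would take $\widetilde X = \mathrm{Bl}_\m X$, which is the total space of $\sO_V(-1)$ over $V$ with exceptional divisor $E$ canonically the zero section $V$. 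Since only $i = 1, 2$ are relevant (as $\dim E = 2$), the task reduces to the two equivalences
\begin{align*}
H^1(V, \sO_V) \ne 0 &\iff \mathrm{Cl}(R) \supset \Q/\Z, \\
H^2(V, \sO_V) \ne 0 &\iff \mathrm{Br}'(\mathrm{Spec}^\circ R) \supset \Q/\Z.
\end{align*}

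For the first equivalence, $H^1(V, \sO_V) \ne 0$ is equivalent to $\mathrm{Pic}^0(V)$ having positive dimension, equivalently (since it is divisible as an abelian variety) to $\Q/\Z \subset \mathrm{Pic}^0(V)$. The standard exact sequence
\[
\Z \cdot [\sO_V(1)] \longrightarrow \mathrm{Pic}(V) \longrightarrow \mathrm{Cl}(R) \longrightarrow 0
\]
for the affine cone over the projectively normal smooth surface $V$ identifies $\mathrm{Cl}(R)$ with $\mathrm{Pic}(V) / \Z \cdot [\sO_V(1)]$, and because $\sO_V(1)$ is ample, the composite $\mathrm{Pic}^0(V) \hookrightarrow \mathrm{Pic}(V) \twoheadrightarrow \mathrm{Cl}(R)$ is injective. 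Since $\mathrm{Cl}(R)/\mathrm{image}(\mathrm{Pic}^0(V))$ is a quotient of the finitely generated group $\mathrm{Pic}(V)/\mathrm{Pic}^0(V)$, this identifies the maximal divisible subgroup of $\mathrm{Cl}(R)$ with that of $\mathrm{Pic}^0(V)$, giving the equivalence.

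For the second equivalence, the Kummer sequence on $V$, together with transcendental Hodge theory, identifies the maximal divisible subgroup of $\mathrm{Br}'(V)$ noncanonically with $(\Q/\Z)^{h^{0,2}(V)}$; in particular $H^2(V, \sO_V) \ne 0 \iff \Q/\Z \subset \mathrm{Br}'(V)$. To pass to $\mathrm{Br}'(\mathrm{Spec}^\circ R)$, note that $\mathrm{Spec}^\circ R$ is the $\mathbb{G}_\mathrm{m}$-bundle $\sO_V(-1)^\times$ over $V$. The Leray spectral sequence $H^p_{\mathrm{\acute et}}(V, R^q \bar\pi_* \mathbb{G}_\mathrm{m}) \Rightarrow H^{p+q}_{\mathrm{\acute et}}(\mathrm{Spec}^\circ R, \mathbb{G}_\mathrm{m})$ for the projection $\bar\pi$, with $R^0 \bar\pi_* \mathbb{G}_\mathrm{m} = \mathbb{G}_\mathrm{m}$ and $R^1\bar\pi_* \mathbb{G}_\mathrm{m} = \Z$ (the character lattice of the fiber), produces a four-term exact sequence in low degrees whose outer terms $H^1(V, \Z)$ and similar contributions are finitely generated; hence the divisible-torsion subgroups of $\mathrm{Br}'(V)$ and $\mathrm{Br}'(\mathrm{Spec}^\circ R)$ are isomorphic, completing the equivalence.

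The main obstacle will be making this last comparison rigorous. One must verify that $R^1\bar\pi_*\mathbb{G}_\mathrm{m}$ is indeed the constant sheaf $\Z$ on $V$, correctly identify the differential $d_2\colon H^0(V, \Z) = \Z \to H^2(V, \mathbb{G}_\mathrm{m})$ (equivalently, handle the twist by $[\sO_V(-1)]$ that arises either from the Leray or from the Gysin sequence for the zero section $V \hookrightarrow \mathrm{Tot}(\sO_V(-1))$), and confirm that neither $H^1(V, \Z)$ nor the higher differentials contribute $\Q/\Z$-divisible classes to the Brauer-group comparison. Once this spectral-sequence bookkeeping is pinned down, the divisible torsion matches across the chain $\mathrm{Br}'(V) \leftrightarrow \mathrm{Br}'(\mathrm{Spec}^\circ R)$ and the theorem follows.
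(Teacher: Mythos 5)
Your overall reduction and the first two-thirds of the argument match the paper's proof closely: you invoke Theorem~\ref{vanishing of H^n-1} with $E = \Proj R$ to reduce to the vanishing of $H^1$ and $H^2$, your $\mathrm{Cl}(R)$ equivalence via $\mathrm{Cl}(R)\cong\mathrm{Pic}(V)/\Z[\sO_V(1)]$ is the same as the paper's (you spell out the divisibility bookkeeping a bit more carefully), and your identification of the divisible part of $\mathrm{Br}'(V)$ with $(\Q/\Z)^{h^{0,2}}$ is the content of the paper's Claim, proved there via the exponential sequence, the finite generation of $H^i(V_{\rm an},\Z)$, and the surjectivity of $H^2(V_{\rm an},\C)\to H^2(V,\sO_V)$.

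Where you depart is the final comparison $\mathrm{Br}'(V)\leftrightarrow\mathrm{Br}'(\mathrm{Spec}^\circ R)$, and this is where the real gap lies. The paper resolves it by citing Hoobler \cite[Proposition 14]{Hoo}, which furnishes for every prime $p$ a short exact sequence
\[
0\to\mathrm{Br}'(X)\{p\}\to\mathrm{Br}'(\mathrm{Spec}^\circ R)\{p\}\to G_p\to 0
\]
with $G_p$ finite, and then observes that finite cokernels cannot alter whether $\Q_p/\Z_p$ appears; summing over $p$ via $\Q/\Z\cong\bigoplus_p\Q_p/\Z_p$ finishes the argument. Your proposed Leray route is not unreasonable, but your description of the spectral sequence is incorrect on a key point: for the $\mathbb{G}_{\rm m}$-bundle projection $\bar\pi\colon\mathrm{Spec}^\circ R\to V$, one does \emph{not} have $R^1\bar\pi_*\mathbb{G}_{\rm m}=\underline{\Z}$. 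Rather, $R^1\bar\pi_*\mathbb{G}_{\rm m}=0$ (a $\mathbb{G}_{\rm m}$-bundle has trivial relative Picard sheaf), and the constant sheaf $\underline{\Z}$ enters instead through the \emph{zeroth} direct image, via the extension
\[
1\to\mathbb{G}_{\rm m}\to\bar\pi_*\mathbb{G}_{\rm m}\to\underline{\Z}\to 0,
\]
whose extension class is governed by $[\sO_V(1)]$. This means the Leray spectral sequence does not deliver a clean four-term sequence in the form you sketched; one instead has to chase through the long exact sequence of this extension of sheaves, which is precisely the kind of argument Hoobler packages. You acknowledge the step as the ``main obstacle,'' and that acknowledgment is accurate: as written it is not a complete proof, and the specific identification of $R^1\bar\pi_*\mathbb{G}_{\rm m}$ would need to be corrected before the bookkeeping could be carried out. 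I'd recommend either filling in that analysis carefully or, following the paper, simply quoting Hoobler's result.
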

\begin{proof}
Let $X=\Proj R$. In view of Theorem \ref{vanishing of H^n-1}, $R_{\m}$ is of $F$-nilpotent type if and only if $H^i(X, \sO_X)=0$ for $i=1, 2$. 

\begin{cl}
$H^i(X, \sO_X)=0$ for $i=1, 2$ if and only if neither $\mathrm{Br}'(X)$ nor $\mathrm{Pic}(X)$ contains $\Q/\Z$. 
\end{cl}
\begin{proof}[Proof of Claim]
Making use of the Kummer sequence and the smooth base change theorem of \'etale cohomology, we can see that $\mathrm{Br}'(X)$ does not change under extensions of algebraically closed fields of characteristic zero. Therefore, we may assume that $k=\C$. 

It follows from \cite[Proposition 1.3]{Sch} that $\mathrm{Br}'(X)$ is isomorphic to the analytic cohomological Brauer group $\mathrm{Br}'(X_{\rm an})$, the torsion part of $H^2(X_{\rm an},  \sO_{X_{\rm an}}^{\times})$. 
The exponential sequence $0 \to \Z \to \sO_{X_{\rm an}} \to \sO_{X_{\rm an}}^{\times} \to 1$ induces the following exact sequence: 
\[
H^2(X_{\rm an}, \Z) \xrightarrow{\alpha} H^2(X, \sO_X) \to H^2(X_{\rm an},  \sO_{X_{\rm an}}^{\times}) \to H^3(X_{\rm an}, \Z).
\]
Note that $H^i(X_{\rm an}, \Z)$ is finitely generated for every $i$, because $X$ is a projective variety.  
If $H^2(X, \sO_X)=0$, then $H^2(X_{\rm an},  \sO_{X_{\rm an}}^{\times})$ is embedded into the finitely generated abelian group $H^3(X_{\rm an}, \Z)$, so $\mathrm{Br}'(X_{\rm an})$ does not contain $\Q/\Z$. 
Conversely, suppose that $H^2(X, \sO_X) \ne 0$. 
Since we know from Hodge theory that the complexification of the map $\alpha$ is surjective, 
$\Q/\Z$ is contained in $\mathrm{Coker} \; \alpha$, which can be considered as a subgroup of $\mathrm{Br}'(X_{\rm an})$. 
Summing up the above, we see that $H^2(X, \sO_X)=0$ if and only if $\mathrm{Br}'(X)$ does not contain $\Q/\Z$. 
By a similar argument, we also see that $H^1(X, \sO_X)=0$ if and only if $\mathrm{Pic}(X)$ does not contain $\Q/\Z$. 
\end{proof}

It is well-known that $\mathrm{Cl}(R) \cong \mathrm{Pic}(X)/\Z [H]$, where $[H]$ is the class of a hyperplane section, so $\mathrm{Pic}(X)$ contains $\Q/\Z$ if and only if $\mathrm{Cl}(R)$ does.   
Thus, by the above claim, it suffices to show that $\mathrm{Br}'(X)$ contains $\Q/\Z$ if and only if $\mathrm{Br}'(\mathrm{Spec}^{\circ} R)$ does. 
For each prime number $p$, we denote the $p$-power torsion subgroup of an abelian group $M$ by $M\{p\}$. 
By \cite[Proposition 14]{Hoo}, there exists an exact sequence 
\[
0 \to \mathrm{Br}'(X)\{p\} \to \mathrm{Br}'(\mathrm{Spec}^{\circ} R)\{p\} \to G_p \to 0,
\]
where $G_p$ is a finite group. 
Therefore, $\mathrm{Br}'(X)\{p\}$ contains $\Q_p/\Z_p$ if and only if $\mathrm{Br}'(\mathrm{Spec}^{\circ} R)\{p\}$ does. 
Since $\Q/\Z \cong \bigoplus_{p} \Q_p/\Z_p$, this proves what we want. 
\end{proof}

\end{document}